\spnewtheorem{itRemark}{Remark}{\bf}{\it}
\newcommand{\llangle}{\left\langle}
\newcommand{\rrangle}{\right\rangle}
\newcommand{\benum}{\begin{enumerate}}
\newcommand{\eenum}{\end{enumerate}}
\newcommand{\bitm}{\begin{itemize}}
\newcommand{\eitm}{\end{itemize}}
\newcommand{\lrarrow}{\leftrightarrow}
\newcommand{\couronne}{\Delta_R}
\newcommand{\Cluster}{\mathbf{C}}
\newcommand{\VCluster}{\mathbf{c}}
\newcommand{\icomp}{\mathrm{i}}
\newcommand{\dd}{\mathrm{d}}
\newcommand{\dtheta}{\dd^\Lambda\theta}
\newcommand{\norm}[1]{\|#1\|}
\newcommand{\norminf}[1]{\norm{#1}}
\newcommand{\normsup}[1]{\|#1\|_{\scriptscriptstyle\infty}}
\newcommand{\normI}[1]{\|#1\|_{\scriptscriptstyle 1}}
\newcommand{\normII}[1]{\|#1\|_{\scriptscriptstyle 2}}
\newcommand{\setof}[2]{\{#1\,:\,#2\}}
\newcommand{\Bsetof}[2]{\Bigl\{#1\,:\,#2\Bigr\}}
\newcommand{\littleo}[1]{o(#1)}
\newcommand{\bbZ}{\mathbb{Z}}
\newcommand{\bbR}{\mathbb{R}}
\newcommand{\bbN}{\mathbb{N}}
\newcommand{\bbS}{\mathbb{S}}
\newcommand{\bbP}{\mathbb{P}}
\newcommand{\bbE}{\mathbb{E}}
\newcommand{\bbQ}{\mathbb{Q}}
\newcommand{\calA}{\mathcal{A}}
\newcommand{\calB}{\mathcal{B}}
\newcommand{\calE}{\mathcal{E}}
\newcommand{\calF}{\mathcal{F}}
\newcommand{\calG}{\mathcal{G}}
\newcommand{\calR}{\mathcal{R}}
\newcommand{\calS}{\mathcal{S}}
\newcommand{\IF}[1]{\boldsymbol{1}_{\{#1\}}}
\renewcommand{\emptyset}{\varnothing}
\providecommand{\openbox}{\leavevmode
  \hbox to.77778em{%
  \hfil\vrule
  \vbox to.675em{\hrule width.6em\vfil\hrule}%
  \vrule\hfil}}
\DeclareRobustCommand{\qed}{%
  \leavevmode\unskip\penalty9999 \hbox{}\nobreak\hfill
  \quad\hbox{\openbox}%
}
\title{Upper bound on the decay of correlations\\in a general 
class of $O(N)$-symmetric models}
\titlerunning{Decay of correlations in $O(N)$ models}
\author{Maxime Gagnebin \and Yvan Velenik}
\institute{Section de mathématiques, Université de Genève, 
\email{Maxime.Gagnebin@unige.ch}, \email{Yvan.Velenik@unige.ch}}
\date{}
\begin{document}

\maketitle

\begin{abstract}
We consider a general class of two-dimensional spin systems, with 
continuous but not necessarily smooth, possibly long-range, $O(N)$-symmetric 
interactions, for which we establish algebraically decaying upper bounds on 
spin-spin correlations under all infinite-volume Gibbs measures.

As a by-product, we also obtain estimates on the effective resistance of a
(possibly long-range) resistor network in which randomly selected edges are 
shorted.
\end{abstract}

\keywords{Spin systems -- continuous symmetry -- decay of correlations -- 
McBryan-Spencer bound -- Mermin-Wagner theorem}

\tableofcontents

\section{Introduction, statement of results}

\subsection{Settings and earlier results}

We consider the following class of lattice spin systems. To each site 
$i\in\bbZ^2$, we associate a spin $S_i\in\bbR^N$ such that $\normII{S_i}=1$. 
Given $\Lambda\Subset\bbZ^2$, we define the Hamiltonian
\begin{equation}\label{eq:Hamiltonian}
H_\Lambda(S) = -\sum_{(u,v)\in\calE_\Lambda} 
J_{u,v} V(S_u,S_v),
\end{equation}
where $\calE_\Lambda=\setof{(x,y)}{\{x,y\}\cap\Lambda\neq\emptyset}$ is the set 
of all (unoriented) edges intersecting $\Lambda$.

The coupling constants are assumed to satisfy $J_{u,v}=J_{u-v}=J_{v-u}\geq 0$ 
and $\sum_{x\in\bbZ^2} J_x < \infty$; we shall actually assume, without loss 
of generality, that $\sum_{x\in\bbZ^2} J_x = 1$.

The interaction $V$ is assumed to be invariant under simultaneous rotation 
of its two arguments; in other words, it is assumed that $V(S_u,S_v)$ depends
only on the scalar product $S_u\cdot S_v$.

The corresponding finite-volume Gibbs measure in $\Lambda$, with boundary 
condition $\bar S$ and at inverse temperature $\beta$ is then defined by
\[
\mu_{\Lambda;\beta}^{\bar S} (\dd^\Lambda S) =
\begin{cases}
\frac 1{Z_{\Lambda;\beta}^{\bar S}}\, e^{-\beta H_\Lambda(S)} \, 
\dd^\Lambda S
& \text{if $S_y=\bar S_y$ for all $y\not\in\Lambda$,}\\
0	& \text{otherwise,}
\end{cases}
\]
where we used the notation $\dd^\Lambda S=\prod_{x\in\Lambda} \dd S_x$, with
$\dd S_x$ denoting the Haar measure on $\bbS^{N-1}$. The expectation with respect to this
measure will be denoted~$\langle\cdot\rangle_{\Lambda, \beta}^{\bar S}$ or $\langle\cdot\rangle_\mu$.
The conditions we will impose on $V$ in Section~\ref{sec:AssumptionsAndResults} 
 will guarantee
the existence of the Gibbs measure above (that is, that 
$\infty>Z_{\Lambda;\beta}^{\bar S}>0$).

\medskip
It is well-known that in this setting, under some mild conditions, all 
infinite-volume Gibbs measures associated to such a system are necessarily 
rotation-invariant. Since the classical result of Mermin and 
Wagner~\cite{MerminWagner}, numerous works have been devoted to strengthening 
the claims and weakening the hypotheses. 
In the present context, the strongest statement to date is proven 
in~\cite{ISV}. In the latter paper, rotation invariance of the infinite-volume 
Gibbs measures was established under the following assumptions:
\begin{itemize}
\item the random walk on $\bbZ^2$ with transition probabilities from 
$u$ to $v$ given by $J_{u,v}$ is recurrent;
\item the interaction $V$ is continuous (actually, a weaker integral 
condition is also given there).
\end{itemize}
The recurrence assumption is known to be optimal in general, as there are 
versions of the two-dimensional $O(N)$ model for which rotation invariance is 
spontaneously broken at sufficiently low temperatures, 
whenever the random walk is transient (see~\cite[Theorem~20.15]{Georgii} and
references therein). The absence of any smoothness 
assumption on $V$ was the main contribution of~\cite{ISV}, such assumptions 
having played a crucial role in earlier approaches (see, for example, 
\cite{%
DobrushinShlosman,Pfister,FrohlichPfister,KleinLandauShucker,BonatoPerezKlein,%
Ito}).

\bigskip
A particular consequence of the rotation-invariance of the infinite-volume 
Gibbs measures is the fact that spin-spin correlations $\langle S_0 \cdot 
S_x\rangle$ vanish as $\norminf{x}\to\infty$. A natural problem is then to 
quantify the speed of decay of these quantities.

The first class of systems that have been studied had finite-range (usually 
nearest-neighbor) interactions. An upper bound on the decay of the form
${\langle S_0 \cdot S_x\rangle} \leq (\log\norminf{x})^{-c}$, $c>0$, was first
derived by Fisher and Jasnow~\cite{FisherJasnow} for the $O(N)$ models
(which correspond to $V(S_u,S_v)=S_u\cdot S_v$), $N\geq 2$. Their result was 
then 
extended by McBryan and Spencer~\cite{McBrianSpencer}, who obtained an 
algebraically 
decaying upper bound of the form $\langle S_0 \cdot S_x\rangle \leq 
\norminf{x}^{-c/\beta}$, which is best possible in general. Indeed, Fröhlich
and 
Spencer have proved an algebraically decaying \emph{lower} bound of that type 
for the two-dimensional $XY$ model ($O(2)$ model) at low 
temperatures~\cite{FrohlichSpencer}. Building
on~\cite{DobrushinShlosman}, Shlosman managed to
obtain upper bounds of the same type for a much larger class of
interactions~\cite{Shlosman}, under some smoothness assumption on
$V$. A similar, but less general, result was later obtained by 
Naddaf~\cite{Naddaf}, using an adaptation of the McBryan-Spencer approach. More 
recently, Ioffe, Shlosman and Velenik showed how to dispense with the 
smoothness assumption, extending Shlosman's result to very general
interactions $V$~\cite{ISV}.

The first results for models with infinite-range interactions provided an upper 
bound \textit{à la} Fisher-Jasnow~\cite{BonatoPerezKlein,Ito} for models with
$J_x$ such that the corresponding random walk is recurrent. An algebraically
decaying upper bound was obtained by Shlosman~\cite{Shlosman} for a general
class of models (with a smoothness assumption on $V$) in the case of
exponentially decaying coupling constants. Algebraically decaying upper bounds
were obtained for $O(N)$ models by Messager, Miracle-Sole and 
Ruiz~\cite{Marseillais}, when the coupling constants satisfy $J_x\leq 
C\norminf{x}^{-\alpha}$ with $\alpha>4$.

\subsection{Assumptions and results}\label{sec:AssumptionsAndResults}

Let us now turn to the results contained in the present paper. Since, as is 
explained in~\cite{McBrianSpencer}, the case $N\geq 3$ can be reduced to the 
case $N=2$, we restrict our attention to the latter. In that case, it is 
convenient to parametrize the spins by their angle, that is, to associate to 
each vertex $u\in\bbZ^2$ the angle $\theta_u\in[0,2\pi)$ such that 
$S_u=(\cos\theta_u,\sin\theta_u)$. One can then rewrite the interaction as
\[
\beta V(S_u,S_v) = f(\theta_u-\theta_v),
\]
which we shall do from now on. (Of course, addition is  mod $2\pi$ and $f$ is
even.) 
Our analysis will rely on two assumptions~\footnote{In all this paper, we use 
the notation $a\lesssim b$ when there exists a constant $C$, depending maybe on 
the value of $\alpha$ below, but not on any other parameters, such that $a\leq C 
b$. When $a\lesssim b$ and $b\lesssim a$, we write $a\sim b$.}:
\begin{itemize}
\item[\textbf{A1.}] There exist $\alpha>4$  and $J\geq 0$ such that $J_x\leq  
J \normI{x}^{-\alpha}$, for all $x\in\bbZ^d$.
\item[\textbf{A2.}] The function $f$ is continuous.
\end{itemize}

The main result of this paper is the following claim, which substantially 
increases the range of models for which algebraically decreasing upper bounds 
on spin-spin correlations can be established.
\begin{theorem}\label{thm:Main}
Under Assumptions A1 and A2, there exist $C$ and $c$ (depending on the 
interaction $f$ and the coupling constants $(J_x)_{x\in\bbZ^d}$) such that, 
for any infinite-volume Gibbs measure $\mu$ associated to the 
Hamiltonian~\eqref{eq:Hamiltonian},
\[
\left|\langle \cos(\theta_0-\theta_x)\rangle_\mu\right|
\leq
c\norminf{x}^{-C},\qquad\forall x\in\bbZ^d.
\]
\end{theorem}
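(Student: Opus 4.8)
The plan is to adapt the complex-translation (McBryan–Spencer) argument to this non-smooth, long-range setting, using the trick from \cite{ISV} to circumvent the lack of differentiability of $f$, and the polynomial decay of $(J_x)$ from Assumption~A1 to control the long-range contributions. First I would reduce to a finite-volume estimate: by the DLR equations, it suffices to bound $\langle\cos(\theta_0-\theta_x)\rangle_{\Lambda,\beta}^{\bar S}$ uniformly in $\Lambda$ and $\bar S$, since any infinite-volume Gibbs measure is a mixture of such finite-volume states. Writing $\cos(\theta_0-\theta_x)=\mathrm{Re}\,e^{\icomp(\theta_0-\theta_x)}$, I would introduce the complex shift $\theta_u\mapsto\theta_u+\icomp\,a_u$, where $a:\bbZ^2\to\bbR$ is a fixed real ``test function'' with $a_0-a_x$ as large as possible subject to the gradients $a_u-a_v$ being small wherever $J_{u,v}$ is non-negligible. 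Deforming the integration contour (spins being complexified to a strip), one gets
\begin{equation*}
\left|\langle e^{\icomp(\theta_0-\theta_x)}\rangle_{\Lambda,\beta}^{\bar S}\right|
\leq e^{-(a_0-a_x)}\;
\frac{\displaystyle\int e^{-\mathrm{Re}\,H_\Lambda(\theta+\icomp a)}\,\dtheta}
{\displaystyle\int e^{-H_\Lambda(\theta)}\,\dtheta},
\end{equation*}
so everything reduces to showing that the ratio of partition functions grows at most like $e^{o(a_0-a_x)}$, i.e. that $\mathrm{Re}\,f(z+\icomp(a_u-a_v))-f(z)$ is summable against $J_{u,v}$ and small when the gradients are small.

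The core estimate is therefore a bound of the form $\mathrm{Re}\,f(z+\icomp t)-f(z)\lesssim g(t)$, uniformly in the real variable $z$, with $g(t)\to 0$ as $t\to0$, which must hold for merely \emph{continuous} $f$. Here I would use the key idea of \cite{ISV}: one does not complexify $f$ directly, but first replaces $f$ (equivalently, the weight $e^{f}$) by a smoothed version obtained by convolving with an approximate identity on the circle, pays a small multiplicative error controlled by the modulus of continuity of $f$, and then complex-translates the smooth surrogate, for which the needed analytic bound follows from a Taylor/Cauchy estimate. The smoothing scale is a free parameter to be optimized at the end. After summing over edges, the energy cost is bounded by
\begin{equation*}
\sum_{(u,v)\in\calE_\Lambda} J_{u,v}\,\bigl(\mathrm{Re}\,f(\theta_u-\theta_v+\icomp(a_u-a_v))-f(\theta_u-\theta_v)\bigr)
\lesssim \sum_{(u,v)} J_{u,v}\,\Bigl((a_u-a_v)^2 + \text{smoothing error}\Bigr),
\end{equation*}
so I need a real function $a$ making $\sum_{u,v}J_{u,v}(a_u-a_v)^2$ small (this is a Dirichlet energy in the resistor network with conductances $J_{u,v}$) while keeping $a_0-a_x$ of order, say, $\log\norminf{x}$; the ratio of these two quantities is exactly an effective-resistance/capacity bound, which is where the ``resistor network'' statement from the abstract enters.

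The construction of $a$ is where Assumption~A1 is essential and is the main obstacle. For finite-range models one takes $a$ proportional to a truncated logarithm, and $\sum J_{u,v}(a_u-a_v)^2$ is $O(1/\log\norminf{x})$ by the classical recurrence/capacity estimate in two dimensions; with $a_0-a_x\sim\log\norminf{x}$ the product gives the exponent $C$. For long-range $J_x$ the difficulty is the ``non-local'' gradients $a_u-a_v$ with $\norminf{u-v}$ large: a slowly-varying $a$ can have $a_u-a_v$ of order $\log\norminf{u-v}$ across a long edge, and one must check $\sum_x J_x (\log\norminf{x})^2<\infty$ and, more importantly, that the edges straddling the two ``plateaux'' (where $a\approx a_0$ near $0$ and $a\approx a_x$ near $x$) contribute $O(1/\log\norminf{x})$ rather than $O(1)$. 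This is precisely where $\alpha>4$ is used: the number of long edges joining a neighborhood of $0$ to a neighborhood of $x$ at scale $\norminf{x}$, weighted by $J\norminf{x}^{-\alpha}$ and by $(a_0-a_x)^2\sim(\log\norminf{x})^2$, must be summable and $o(1/\log\norminf{x})$, and a direct count of such edge pairs shows the relevant sum behaves like $\norminf{x}^{4-\alpha}(\log\norminf{x})^2$, which is $o(1/\log\norminf{x})$ exactly when $\alpha>4$. I would therefore design $a$ to interpolate logarithmically between the two plateaux on the annulus $\norminf{y}\in[1,\norminf{x}]$, bound the short-edge contribution by the classical two-dimensional Dirichlet energy and the long-edge contribution by the power-counting above, optimize the smoothing scale against $\norminf{x}$, and read off $C$ from the resulting balance; letting $\Lambda\uparrow\bbZ^2$ and taking the real part finishes the proof.
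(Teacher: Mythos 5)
Your scaffolding is largely right---the DLR reduction, the McBryan--Spencer complex rotation, the resistor-network heuristic for choosing the profile $a$, and the role of $\alpha>4$ in taming the long edges---but your handling of the non-smoothness of $f$ contains a genuine gap. You propose to replace $f$ by a mollified $f_\delta$, pay a multiplicative error controlled by the modulus of continuity, and then complex-rotate the smooth surrogate. That error is paid on every edge: replacing $f$ by $f_\delta$ in $H_\Lambda$ changes the exponent by up to $\normsup{f-f_\delta}\sum_{(u,v)\in\calE_\Lambda}J_{u,v}$, and the latter sum is of order $|\Lambda|$, so the accumulated multiplicative error diverges as $\Lambda\uparrow\bbZ^2$ for any fixed smoothing scale. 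Making the scale $\Lambda$-dependent to beat the volume factor forces the size of the allowable imaginary shift (governed by $\normsup{f_\delta''}$, or the width of the analyticity strip of $f_\delta$) to degenerate, and the bound collapses. This volume obstruction is precisely what the ISV mechanism was designed to circumvent; it is \emph{not} a smoothing-plus-error argument, and mischaracterizing it leaves the central difficulty unaddressed.

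What the paper actually does, following ISV, is a high-temperature expansion of the error rather than a pointwise smoothing. One writes $f=\tilde f+\bar\epsilon$ with $\tilde f$ a trigonometric polynomial (hence entire) and $0\leq\bar\epsilon\leq\epsilon$ uniformly small, and expands $\prod_{(u,v)}\bigl(e^{J_{u,v}\bar\epsilon(\nabla_{u,v}\theta)}-1+1\bigr)$ over edge subsets $A\subset\calE$; this gives a convex decomposition $\langle\cdot\rangle=\sum_A\pi(A)\langle\cdot\rangle_A$, in which the non-analytic factors live only on the edges of $A$. The complex rotation $\theta\mapsto\theta+\icomp a$ can then be performed cluster by cluster, \emph{provided $a$ is constant on each connected component of $A$} (and vanishes outside $\Lambda$). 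The non-smoothness problem thus becomes geometric: for which $A$ can one still build a logarithmic profile $a^A$ of small Dirichlet energy under this constraint? The paper shows that $\pi$ is stochastically dominated by subcritical Bernoulli percolation with edge probabilities $2\epsilon J_{x,y}$, defines a class of ``good'' $A$ (no cluster of $\Lambda_{R^{1/2}}$ reaching $\Lambda_{2R^{1/2}}^c$, sublinear outward cluster radii, and $\sum_{u\in\couronne}r_A(u)^2/\norminf{u}^2\lesssim\log R$), constructs a cluster-constant $a^A$ giving $R^{-C}$ on good $A$, and proves the bad set has algebraically small $\pi$-probability. This expansion/percolation layer is the bulk of the paper's proof, and your proposal omits it entirely.
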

\begin{itRemark}\label{rem:CsurBeta}
The above result does not specify the dependence of the constant $C$ on the 
inverse temperature $\beta$. To obtain such an information, the method 
of proof we use requires some assumptions on the smoothness of $f$. For 
example, as explained in Remark~\ref{rem:DepOnBeta}, if $f$ is assumed to be 
$s$-Hölder, for some $s>3$, then we obtain that $\left|\langle 
\cos(\theta_0-\theta_x)\rangle_\mu\right| \leq c\norminf{x}^{-C/\beta}$ for 
large enough values of $\beta$.
\end{itRemark}
\begin{itRemark}
We have assumed above that the coupling constants $(J_x)_{x\in\bbZ^d}$ were 
nonnegative, but as can easily be checked from the proof, this assumption can 
be removed, as long as $\sum_x |J_x|<\infty$.
\end{itRemark}
\begin{itRemark}
Note that, when $J_x\sim\norm{x}^{-\alpha}$ with $\alpha<4$, the result is 
actually not true in general, as long-range order has been established in that 
case in some 
models~\cite{KunzPfister}.
\end{itRemark}
The proof of Theorem~\ref{thm:Main} is given in 
Section~\ref{sec:ProofOfMainThm}. It is based on a suitable combination of the 
McBryan-Spencer approach, in the form derived in~\cite{Marseillais}, with the
expansion technique developed in~\cite{ISV} in order to deal with not 
necessarily smooth interactions.

\subsection{A result on a random resistor network}

The proof of Theorem~\ref{thm:Main} is closely linked to the properties 
of a random resistor network, and can be used to extract some information on 
the latter. We refer to~\cite{DoyleSnell} for a gentle introduction to resistor 
networks.

We now interpret $1/J_{x,y}$ as the resistance of a wire between the 
vertices $x$ and $y$. The quantity
\[
\calE_{\text{eff}}
=
\min\Bsetof{\frac12\sum_{u,v\in\bbZ^2} 
J_{u,v}\bigl(g(u)-g(v)\bigr)^2}{g(0)=1,g(x)=0} 
\]
represents the energy dissipated by the network when a voltage of $1$
volt is imposed between $0$ and $x$. It is related in a simple way to 
the effective resistance $\calR(0,x)$ of the resistor network:
\[
\calR(0,x) = \bigl( \calE_{\text{eff}} \bigr)^{-1}.
\]
It is classical in random walk theory (see, for example, 
\cite[Theorem~4.4.6]{LawlerLimic}) that under the Assumption~\textbf{A1} on
the coupling constants $(J_x)_{x\in\bbZ^d}$, the effective resistance satisfies
\[
\calR(0,x) \sim\log\norminf{x}.
\]
Note that an assumption of type~\textbf{A1} is necessary: there are examples 
for 
which $\calR(0,x)$ grows more slowly than $\log\norm{x}$. It is the case if the 
transition probabilities $J_{x,y}$ are proportional to $1/\norm{x-y}^4$, as can 
be shown by a direct computation~\footnote{\label{footnote_loglog}This is 
ultimately due to the change of behavior of the characteristic function of the 
increments of the random walk associated to the coupling constant 
$(J_x)_{x\in\bbZ^d}$: assume that $J_x=C\norm{x}^s$, 
then~\cite[(3.11)]{BiskupRP}, as $k\to 0$,
\[
1-\sum_x J_x e^{\icomp k\cdot x} \sim
\begin{cases}
\norm{k}^2			& \text{if $s>4$,}\\
\norm{k}^2\log(1/\norm{k})	& \text{if $s=4$,}\\
\norm{k}^{s-2}			& \text{if $s<4$.}
\end{cases}
\]
This implies, in particular, that the random walk is transient when 
$s<4$, and that the potential kernel $a(x)$ satisfies $a(x)\sim\log\norm{x}$ 
when $s>4$, but $a(x)\sim\log\log\norm{x}$ when $s=4$.
}.
An adaptation of the proof of Theorem~\ref{thm:Main} yields an extension of 
this result to a random resistor model, in which the resistance 
$R_{x,y}$ between two vertices $x$ and $y$ is taken to be
\[
R_{x,y}=
\begin{cases}
1/J_{x,y}	&	\text{\rm  with probability }1-\epsilon J_{x,y},\\ 
0		&	\text{\rm with probability }\epsilon J_{x,y},
\end{cases}
\]
the choice being done independently for each pair $(x,y)$.
In other words, randomly selected resistors are ``shorted'' (which amounts to 
identifying endpoints). This, of course, can only lower the effective 
resistance, compared to the above deterministic case. The following result 
shows that, when $\epsilon$ is small enough, this decrease typically does not 
change the qualitative behavior of $\mathcal R(0,x)$.
\begin{theorem}\label{thm:Effective}
Under Condition~\textbf{A1}, we have that, for all $\epsilon$ small enough, 
there exist $C,\kappa$ and $\tilde c>0$ such that, uniformly in $x\in\bbZ^2$,
\[
\bbP\bigl(\mathcal R(0,x)\geq\tilde c \log \norminf{x} \bigr) \geq 
1-\frac{C}{\norminf{x}^{\kappa}}.
\]
\end{theorem}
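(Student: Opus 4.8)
The plan is to exhibit, with probability at least $1-C\norminf{x}^{-\kappa}$, an admissible \emph{voltage} function for the shorted network whose Dirichlet energy is $\lesssim 1/\log\norminf{x}$. Shorting an edge forces any finite-energy voltage to take equal values at its endpoints (and makes that edge contribute nothing), so the Dirichlet variational principle reads
\[
\calR(0,x)^{-1}=\inf\Bsetof{\tfrac12\sum_{u,v\in\bbZ^2}J_{u,v}\bigl(g(u)-g(v)\bigr)^2}{g(0)=1,\ g(x)=0,\ g\text{ constant on each shorted cluster}},
\]
and any test function feeding this infimum yields a lower bound on $\calR(0,x)$. The reference object is the deterministic logarithmic profile $g_0(u)=\max\bigl(0,1-\log\norminf{u}/\log\norminf{x}\bigr)$ for $u\neq0$, $g_0(0)=1$, which already realises $\calR(0,x)\sim\log\norminf{x}$: grouping terms by $w=u-v$ and using $\log(1+t)\le t$ one gets $\tfrac12\sum_{u,v}J_{u,v}(g_0(u)-g_0(v))^2\lesssim 1/\log\norminf{x}$, the key input being that $\sum_wJ_w\norminf{w}^2<\infty$, which holds \emph{precisely} because $\alpha>4$ (this is also why Theorem~\ref{thm:Effective} fails when $\alpha<4$).

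First I would prove a percolation-type control on the random set $\calS$ of shorted edges. Since an edge of length $\ell$ lies in $\calS$ with probability at most $\epsilon J\ell^{-\alpha}$ and $\alpha>4$, a first-moment estimate shows that, off an event of probability $\le C\norminf{x}^{-\kappa}$: (a) no edge of $\calS$ of length $\ge\norminf{x}^{\delta}$ meets $\bsetof{u}{\norminf{u}\le2\norminf{x}}$, for a suitable $\delta=\delta(\alpha,\kappa)<1$; (b) for $\epsilon$ small, an exploration/branching argument gives $\bbP(|\mathrm{Cl}(0)|\ge n)\lesssim(C\epsilon)^{n}$, hence $|\mathrm{Cl}(0)|,|\mathrm{Cl}(x)|\le C'\log\norminf{x}$ and in particular $\mathrm{Cl}(0)\cap\mathrm{Cl}(x)=\emptyset$; (c) at every dyadic scale $2^{k}\le2\norminf{x}$, the shorted clusters meeting the corresponding annulus have size and diameter bounded by explicit slowly growing functions of $k$ (and of $\norminf{x}$). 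Call $\calG$ the intersection of these events, so that $\bbP(\calG)\ge1-C\norminf{x}^{-\kappa}$; on $\calG$ the cluster partition is ``locally small and sparse''.

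On $\calG$ I would build the test function: put $g_1\equiv1$ on $\mathrm{Cl}(0)$, $g_1\equiv0$ on $\mathrm{Cl}(x)$, let $g_1$ follow the logarithmic profile emanating from these two small clusters, and finally freeze $g_1$ to a constant (lying between the min and max of the profile over that cluster) on every remaining shorted cluster. Writing $g_1=g_1'+h$ with $g_1'$ the profile before the last step, one has $\calE(g_1)\le2\calE(g_1')+2\calE(h)$; here $\calE(g_1')\lesssim 1/\log\norminf{x}+(|\mathrm{Cl}(0)|+|\mathrm{Cl}(x)|)/\log^2\norminf{x}$, the second summand being the contribution of the edges joining $\mathrm{Cl}(0)\cup\mathrm{Cl}(x)$ to the bulk (here $\sum_wJ_w\log^2(1+\norminf{w})<\infty$ is used), and, since $h$ is supported on the frozen clusters with $|h(u)|\le\mathrm{osc}_{\mathrm{Cl}(u)}g_1'$ and $\sum_vJ_{u,v}=1$,
\[
\calE(h)\le2\sum_{u\in\bbZ^2}h(u)^2\le 2X,\qquad X:=\sum_{\mathcal K}|\mathcal K|\bigl(\mathrm{osc}_{\mathcal K}g_1'\bigr)^2,
\]
the sum running over shorted clusters $\mathcal K$ disjoint from $\mathrm{Cl}(0)\cup\mathrm{Cl}(x)$. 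Using the gradient bound $\mathrm{osc}_{\mathcal K}g_1'\lesssim\mathrm{diam}(\mathcal K)\bigl/\bigl((\rho_{\mathcal K}\vee\mathrm{diam}\mathcal K)\log\norminf{x}\bigr)$, with $\rho_{\mathcal K}$ the distance from $\mathcal K$ to the two endpoints, a scale-by-scale computation gives $\bbE[X]\lesssim\epsilon/\log\norminf{x}$: each dyadic shell contributes $\lesssim\epsilon/\log^2\norminf{x}$ and there are $\sim\log\norminf{x}$ shells.

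The main obstacle is to promote $\bbE[X]\lesssim\epsilon/\log\norminf{x}$ into $\bbP\bigl(X\ge c_0/\log\norminf{x}\bigr)\le C\norminf{x}^{-\kappa}$ — a plain Markov bound only gives a probability of order $\epsilon$, not a power of $\norminf{x}$. I would decompose $X=\sum_kX_k$ along dyadic shells and note that, on $\calG$, each $X_k$ is a sum of nonnegative, uniformly bounded, short-range-dependent increments with mean $\lesssim\epsilon/\log^2\norminf{x}$, so a Bernstein/Azuma-type inequality yields $\bbP\bigl(X_k\ge C\epsilon/\log^2\norminf{x}\bigr)\le\norminf{x}^{-\kappa}/\log\norminf{x}$ for all shells not too close to $0$ or $x$, and a union bound over the $\sim\log\norminf{x}$ shells closes this part. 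The delicate regime is the few innermost shells around the two endpoints, where the boundedness constant entering Bernstein is too large; there I would instead argue directly that the only shorting configurations producing a non-negligible contribution involve a moderately large or far-reaching shorted cluster near an endpoint, an event of probability $\le\norminf{x}^{-\kappa}$ by the exploration estimate $\bbP(|\mathrm{Cl}(0)|\ge n)\lesssim(C\epsilon)^{n}$ with $\epsilon$ small. Combining this with $\bbP(\calG)\ge1-C\norminf{x}^{-\kappa}$ and with $|\mathrm{Cl}(0)|+|\mathrm{Cl}(x)|\le C'\log\norminf{x}$, we obtain $\calR(0,x)^{-1}\le\calE(g_1)\lesssim 1/\log\norminf{x}$, i.e.\ $\calR(0,x)\ge\tilde c\log\norminf{x}$, off an event of probability $\le C\norminf{x}^{-\kappa}$. (This is the same mechanism as in the proof of Theorem~\ref{thm:Main}: the expansion of~\cite{ISV} turns the not-necessarily-smooth interaction into a random perturbation of the McBryan--Spencer weights, and what one needs is precisely that the effective conductance of the resulting random network stays $\lesssim 1/\log\norminf{x}$.)
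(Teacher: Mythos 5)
Your proposal is correct in spirit and rests on the same idea as the paper: lower-bound $\calR(0,x)$ via the Dirichlet variational principle by exhibiting a test function that follows a logarithmic profile but is frozen to a constant on every shorted cluster, and show by percolation-plus-concentration estimates that its energy is $\lesssim 1/\log\norminf{x}$ off an event of polynomially small probability. Where the two arguments diverge is that the paper's Section~3 proof is essentially a two-line corollary of the machinery already built for Theorem~\ref{thm:Main}. The function $a^A$ of~\eqref{def_newa} \emph{is} your frozen log-profile (it takes the value of $\tilde a$ at the farthest point $m_A(u)$ of the cluster of $u$, hence is cluster-constant by construction), and the energy $\sum_{u,v}J_{u,v}(a^A_u-a^A_v)^2$ is bounded on the good set $\calG$ by the three-term splitting already used in~\eqref{eq:twoterms}: the middle term is the deterministic log-profile energy, and the two extreme terms are exactly your correction $X$, which is $\lesssim\delta^2\log R$ by Condition~\ref{somme_k2Nik} of Definition~\ref{def_goodconfig}. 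Thus $\bbP(\calB)\lesssim R^{-\kappa}$, already established in Section~\ref{ssec:BadSet}, directly gives the theorem after normalizing $\delta\sim1/\log R$. Your proposal re-derives all of this from scratch, and two places where the paper's choices are cleaner are worth noting. First, the paper's profile $\tilde a$ is held constant inside $\Lambda_{2R^{1/2}}$ and outside $\Lambda_R$, so the ``delicate regime near the endpoints'' that forces your separate exploration argument never arises; Condition~\ref{pas_de_connection_hors_N13} guarantees that the clusters of $0$ and $x$ lie entirely in the flat regions. Second, the concentration step that you treat by a ``Bernstein/Azuma-type inequality on short-range-dependent increments'' is precisely the part that needs care, since the cluster-indexed terms are genuinely dependent (all vertices in one cluster share the same $r_A$); the paper handles this rigorously by stochastic domination of $\sum_{u\in\couronne}r_A(u)^2/\norminf{u}^2$ by an i.i.d.\ sum (Proposition~\ref{sommenArA2}) followed by the Chernoff bound of Lemma~\ref{lem_sommeva}, which is the device you would want to invoke in place of an unstated dependent-Bernstein inequality.
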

\begin{itRemark}
Of course, the upper bound
\[
\bbP\bigl(\mathcal R(0,x)\geq\tilde c \log \norminf{x} \bigr)
\leq
1-\epsilon J_{x},
\]
always holds, since $\calR(0,x)=0$ whenever $R_{0,x}=0$.
\end{itRemark}

\subsection{Open problems}

\begin{itemize}
\item We have only obtained an explicit dependence of the decay exponent on 
$\beta$ when $f$ is sufficiently smooth, as described in 
Remark~\ref{rem:CsurBeta}. It would be interesting to determine whether such a 
behavior can be established for any continuous interaction.
\item Our results above are restricted to $\alpha>4$. It may seem more natural 
to assume only recurrence of the random walk with transition probabilities $J_x$
(note that, as already mentioned, if $J_x\sim \norm{x}^{-\alpha}$ with 
$\alpha<4$, then the corresponding random walk is transient). Indeed, 
recurrence is the optimal condition for the validity of Mermin-Wagner type 
theorems. However, it seems unlikely (for the reason outlined in 
Footnote~\ref{footnote_loglog}) that the correlations admit an 
algebraically decaying upper bound in the whole recurrence regime. 
Some upper bounds with slower decay have been obtained under 
weaker conditions than~\textbf{A1}  (in particular, when $\alpha=4$)
in~\cite{Marseillais} in the case of the $O(N)$ model. It would be interesting 
to clarify these issues.
\item As mentioned above, negative coupling constants can be accommodated in 
our approach, but only in a very rough way. In general, situations in which the 
sign of the coupling constants is not constant should allow an extension of 
this type of results to interactions with slower decay; see~\cite{Bruno} for a 
discussion of the $XY$ and Heisenberg models with oscillatory interactions 
of the form $J_x\sim\cos(a\norm{x}+b)/\norm{x}^\alpha$. 
Alternatively, one can consider disordered $O(N)$-models in which the coupling 
constants are i.i.d.~random variables with zero mean (satisfying suitable 
moment conditions); algebraic decay of correlations has been established in 
this setting in~\cite{Picco} and~\cite{vanEnter}. It would be interesting to 
determine whether our approach also applies in such situations.
\item Of course, by far the most important open problem in this area is the 
proof of the conjecture stating that the spin-spin correlations in short-range 
$O(N)$ models with $N\geq 3$ decay exponentially fast at any 
temperature~\cite{Simon15}.
\end{itemize}

\begin{acknowledgement}
The authors are grateful to the two referees for a very careful 
reading, several corrections and useful suggestions. This research was 
partially supported by the Swiss National Science Foundation.
\end{acknowledgement}

\section{Proof of Theorem~\ref{thm:Main}}
\label{sec:ProofOfMainThm}
We consider the model in a large box $\Lambda_M=\{-M,\ldots,M\}^2$, with fixed, 
but arbitrary, boundary condition $\bar\theta$. Our goal in this section is to 
prove that there exist $C$ and $c$ such that
\begin{equation}\label{eq:TargetEstimate}
\bigl|\langle\cos(\theta_0-\theta_x)\rangle_{\Lambda_M}^{\bar\theta}\bigr|
\leq
c\norminf{x}^{-C},
\end{equation}
uniformly in $M,\bar\theta$ and $x\in\Lambda_M$.
The main claim easily follows from~\eqref{eq:TargetEstimate} and the DLR 
equations. Indeed, for an arbitrary infinite-volume Gibbs measure $\mu$,
\[
\bigl|\langle\cos(\theta_0-\theta_x)\rangle_{\mu}\bigr|
=
\bigl|\bigl\langle\langle\cos(\theta_0-\theta_x)\rangle_{\Lambda_M}^{\cdot}
\bigr\rangle_{\mu}\bigr|
\leq
\bigl\langle\bigl|\langle\cos(\theta_0-\theta_x)\rangle_{\Lambda_M}^{\cdot}
\bigr|\bigr\rangle_{\mu}
\leq
c\norminf{x}^{-C}.
\]

\bigskip
The remainder of this section is devoted to the proof 
of~\eqref{eq:TargetEstimate}. As we shall always work in the fixed big box 
$\Lambda_M$, with the fixed boundary condition $\bar\theta$ there won't be any 
ambiguity if we use the
following lighter notations: $\langle\cdot\rangle 
\equiv \langle\cdot\rangle_{\Lambda_M}^{\bar\theta}$, $H\equiv 
H_{\Lambda_M}$, $Z=Z_{\Lambda_M}^{\bar\theta}$ and $\calE\equiv 
\calE_{\Lambda_M}$. Also, we shall write $\norm{x}\equiv\normsup{x}$.

\subsection{Warm-up: $f$ given by a trigonometric polynomial}
\label{ssec:simple}

We first consider an easier case, in which the interaction $f$ can be written 
as a trigonometric polynomial,
\[
f(x)=\sum_{k=1}^K c_k\cos(kx).
\]
Although this case can be treated by a straightforward adaptation of the 
arguments in~\cite{Marseillais}, we include the complete argument here, as 
we shall need it when considering the general case in 
Subsection~\ref{sec:GeneralCase}.

\subsubsection{McBryan and Spencer's complex rotation}

In~\cite{McBrianSpencer}, McBryan and Spencer used a complex rotation of the 
spin variables in order to reduce the analysis of the correlations of the 
nearest-neighbor $O(N)$ model to a variational problem. The same can be done 
here. First, observe that
\begin{align*}
\left|\langle \cos(\theta_0-\theta_x)\rangle\right|
&\leq\left|\langle\cos(\theta_0-\theta_x)
+ \icomp\sin(\theta_0-\theta_x)\rangle\right|
=
\bigl|\frac1 Z \int e^{\icomp(\theta_0-\theta_x)- H(\theta)} 
\dtheta\bigr|.
\end{align*}
Now, thanks to the periodicity and analyticity of the function 
$\icomp(\theta_0-\theta_x)- H(\theta)$, applying the (inhomogeneous) complex 
rotation $\theta_z\to\theta_z+\icomp a_z$ leaves the integral unchanged as long 
as $a_z=0$ for all $z\not\in\Lambda$. 
Therefore,
\begin{multline*}
\Bigl|\frac1 Z \int e^{\icomp(\theta_0-\theta_x)- 
H(\theta)}d\theta\Bigr|
=
\Bigl|\frac1 Z \int e^{\icomp(\theta_0 + \icomp a_0 - \theta_x - 
\icomp a_x) - H(\theta+\icomp a)} \dtheta\Bigr|\\
\leq
e^{a_x-a_0} \frac1 Z \int \exp\Bigl\{\sum_{(u,v)\in\calE} J_{u,v} 
\sum_{k=1}^K c_k\cos(k\nabla_{u,v}\theta) \cosh(k\nabla_{u,v} a)\Bigr\}\dtheta,
\end{multline*}
where we used the identity $\cos(x+i a)=\cos(x)\cosh(a)-i \sin(x)\sinh(a)$ and 
the notation $\nabla_{u,v}a=a_v-a_u$.
We now reconstruct the original Gibbs measure by adding and subtracting 
$ H(\theta)$:
\begin{align}
e^{a_x-a_0}& \frac1 Z \int \exp\Bigl\{\sum_{(u,v)\in\calE} J_{u,v} 
\sum_{k=1}^K c_k\cos(k\nabla_{u,v}\theta) \cosh(k\nabla_{u,v} a)\Bigr\} 
\dtheta\notag\\
&=
e^{a_x-a_0} \frac1 Z \int 
\exp\Bigl\{\sum_{(u,v)\in\calE}J_{u,v} \sum_{k=1}^K 
c_k\cos(k\nabla_{u,v}\theta)\notag\\
&\hspace*{5cm}
\times \bigl(\cosh(k\nabla_{u,v} 
a)-1\bigr) - H(\theta)\Bigr\} \dtheta\notag\\
&=
e^{a_x-a_0} \bigl\langle \exp\Bigl\{\sum_{(u,v)\in\calE} 
J_{u,v}\sum_{k=1}^K 
c_k\cos(k\nabla_{u,v}\theta) \bigl(\cosh(k\nabla_{u,v} 
a)-1\bigr)\Bigr\}\bigr\rangle\notag\\
&\leq
\exp\Bigl\{ a_x-a_0 +\sum_{(u,v)\in\calE} J_{u,v}\sum_{k=1}^K
|c_k| \bigl(\cosh(k\nabla_{u,v}a)-1\bigr) \Bigr\}
.
\label{eq:PlusMinusH}
\end{align}

\subsubsection{Choice of $a$}

The problem is now reduced to obtaining a good upper bound on
\begin{equation}
\calF(a) = \exp\Bigl\{ a_x-a_0+\sum_{(u,v)\in\calE} J_{u,v} 
\sum_{k=1}^K |c_k| \bigl(\cosh(k\nabla_{u,v} a)-1\bigr)\Bigr\}.
\label{eq_fonctionelle}
\end{equation}
As in~\cite{Marseillais}, we define a particular function $a$ and check 
that it gives the desired decay of correlation. Let us first define the 
function $\bar a:\bbN \to \bbR$ by $\bar a(0) = 0$ and
\begin{equation}\label{eq:abar}
\bar a_i = -\delta \sum_{j=1}^i \frac 1j,\qquad i\geq 1,
\end{equation}
where the parameter $\delta$ will be chosen (small) later.

It will be convenient to decompose $\bbZ^2$ into layers: 
$L_i=\setof{z\in\bbZ^2}{\norminf{z} = i}$. Note that $|L_i|=8i$ for all 
$i\geq1$.

Let $R=\norminf{x}$. Our choice of the function $a$ will be radially symmetric:
\[
a_z =
\begin{cases}
\bar a_{\norminf{z}} - \bar a_R	& \text{if $\norminf{z}\leq R$,}\\
0			& \text{otherwise.}
\end{cases}
\]
Let us now return to the derivation of an upper bound on $\calF(a)$. Note 
that $a_u=a_v=\bar a_i$ whenever $u,v$ belong to the same layer $L_i$, $i\leq 
R$; in particular, in such a case, $\cosh(k\nabla_{u,v}a)-1 = 0$. Therefore,
\begin{align*}
\calF(a)
&\leq
\exp\Bigl\{ \bar a_R + \sum_{i=0}^R\sum_{u\in L_i} \sum_{j\geq 1} 
\sum_{v\in L_{i+j}} J_{u,v} \sum_{k=1}^K |c_k|
\bigl( \cosh(k(\nabla_{u,v} a)) - 1 \bigr) \Bigr\}\notag\\
&=
\exp\Bigl\{ \bar a_R + 
\sum_{i=0}^R\sum_{j\geq1} \sum_{k=1}^K |c_k| \bigl( \cosh(k( \bar a_i - \bar 
a_{i+j})) - 
1 \bigr) \sum_{u\in L_i}\sum_{v\in L_{i+j}} J_{u,v} \Bigr\}\notag\\
&\leq
\exp\Bigl\{ \bar a_R + 
8J2^\alpha \sum_{i=0}^R |L_i| \sum_{j\geq 1} j^{1-\alpha} \sum_{k=1}^K |c_k|
\bigl( \cosh(k( \bar a_i - \bar a_{i+j})) - 1 \bigr) \Bigr\},
\end{align*}
since
\[
\sum_{u\in L_i} \sum_{v\in L_{i+j}} J_{u,v}
\leq
|L_i| 8 \sum_{\ell\geq 0} J_{j+\ell}
\leq 
8 |L_i| \sum_{\ell\geq 0} \frac J{(j+\ell)^\alpha}
\leq
8 J 2^{\alpha}\frac {|L_i|}{j^{\alpha-1}}.
\]
To estimate the sum over $j$, we treat separately the cases $j>i$ and 
$j\leq i$. Let
\[
C_K=\sum_{k=1}^K |c_k|  \quad\text{ and }\quad D_K=\sum_{k=1}^K |c_k| k^2 .
\]
We start with the case $j>i$. Since $\cosh(z)-1\leq e^{z}$ for all 
$z\geq 0$,
\begin{align*}
\sum_{j>i} j^{1-\alpha} \sum_{k=1}^K |c_k|
\bigl\{ \cosh(k(\bar a_i - \bar a_{i+j})) - 1 \bigr\}
&\leq
C_K \sum_{j>i} j^{1-\alpha} \bigl\{ \cosh(K(\bar a_i - \bar a_{i+j})) - 1 
\bigr\}\\
&\leq
C_K \sum_{j>i} j^{1-\alpha} \bigl\{
\cosh\Bigl(K\delta\log\bigl((i+j)/i\bigr)\Bigr)-1 
\bigr\}\\ 
&\leq
C_K \sum_{j>i} j^{1-\alpha} \exp\bigl\{ K\delta\log\bigl((i+j)/i\bigr)\bigr\}\\
&\leq
C_K \sum_{j>i} j^{1-\alpha} \exp\bigl\{ K\delta\log (2j/i) \bigr\}\\
&\leq
C_K \sum_{j>i} j^{1-\alpha} (\frac{2j}i)^{K\delta}\\
&\leq 
2 C_K i^{2-\alpha} .
\end{align*}
Note that we need $\delta$ small enough to ensure that $\alpha-1-K\delta>1$;
this can be done, for example, by choosing $\delta\leq 2/K$ (remember 
that $\alpha>4$). It will actually be convenient to assume the stronger 
condition $K\delta\leq 1$, which we already used above to make the constants in 
the last line a bit simpler.

To treat the case $j\leq i$ we use the fact that $\cosh(t)-1\leq \tfrac23 t^2$ 
when $|t|<1$:
\begin{align*}
\sum_{j=1}^i j^{1-\alpha} \sum_{k=1}^K |c_k| \bigl( 
\cosh(k\delta\sum_{\ell=i+1}^{i+j} 1/\ell\, )-1 \bigr)
&\leq
\sum_{j=1}^i j^{1-\alpha} \sum_{k=1}^K |c_k| \bigl(\cosh(k\delta j/i)-1 
\bigr)\\ 
&\leq
\sum_{j=1}^i j^{1-\alpha} \sum_{k=1}^K |c_k| \tfrac23 \bigl(k\delta j/i 
\bigr)^2\\
&\leq
\tfrac23 D_K \delta^2 i^{-2} \sum_{j\geq 1} j^{3-\alpha}\\
&=
\tfrac 23 D_K\delta^2 i^{-2} \zeta(\alpha-3),
\end{align*}
where $\zeta(\cdot)$ denotes the Riemann zeta function (notice that 
$\zeta(\alpha-3)<\infty$ since $\alpha>4$).
Bringing all the pieces together, we see that 
\begin{align}
\calF(a)
&\leq
\exp\Bigl\{ \bar a_R +
64J2^\alpha \sum_{i=1}^R i 
\bigl( 2 C_K i^{2-\alpha} + \tfrac23 D_K\delta^2 \zeta(\alpha-3) i^{-2} 
\bigr) + 8J2^\alpha C_K \zeta(\alpha-2)\Bigr\} \notag\\
&\leq
\exp \Bigl\{ \bigl(-\delta + \delta^2 \tfrac23  D_K
64J2^\alpha\zeta(\alpha-3)\bigr)\log R +  72 C_K J 2^\alpha 
\zeta(\alpha-3)\Bigr\},
\label{eq_bornefin}
\end{align}
provided that we choose $\delta$ such that the factor multiplying 
$\log R$ be negative (and $K\delta\leq 1$). This can always be done. 
With such a choice, we conclude that there exist
$c,C<\infty$, uniform in $x,M,\bar\theta$, such that
\[
\left|\langle \cos(\theta_0-\theta_x)\rangle\right|
\leq
c\norminf{x}^{-C}.
\]
\begin{itRemark}
Note that, if we write explicitly the dependence on $\beta$ in the above 
expressions, then $D_k$ is actually $\beta D_K$. This yields the classical 
bound
\[
|\langle \cos(\theta_0-\theta_x)\rangle|
\leq c(\beta)\norminf{x}^{-C/\beta}.
\]
\end{itRemark}

\subsection{General case}
\label{sec:GeneralCase}
We now turn our attention to the general case of functions satisfying assumption
\textbf{A2.}

\subsubsection{Measure decomposition}\label{ssec:MeasureDecomposition}

In order to treat general interactions, we proceed as in~\cite{ISV}. Namely, we 
first fix $\epsilon>0$ (which will be assumed to be small enough later on). 
Trigonometric polynomials being dense (w.r.t. the sup-norm) in the set of 
continuous functions on $\bbS^1$, it is possible to find a number 
$K=K(\epsilon)$ such that
\begin{alignat*}{2}
f(x)
&=
\sum_{k=0}^K c_k\cos(kx)	&&+\bar\epsilon(x)\\
&=\tilde f(x)			&&+\bar\epsilon(x),
\end{alignat*}
with $\bar\epsilon$ satisfying the conditions
\[
\normsup{\bar\epsilon}\leq\epsilon \quad\text{ and }\quad\bar\epsilon\geq 0.
\]
Note that the constant $c_0$ doesn't affect the Gibbs measure and can thus be 
assumed to be equal to $0$, which we will do from now on.

Following~\cite{ISV}, we then make a high-temperature expansion of the 
singular part:
\begin{align*}
Z
&=
\int e^{- H(\theta)}\dtheta
=
\int  \exp\Bigl\{ \sum_{(u,v)\in\calE} J_{u,v}
\bigl(\bar\epsilon(\nabla_{u,v}\theta) +
\tilde f(\nabla_{u,v}\theta)\bigr)\Bigr\}
\dtheta\\
&=
\int \exp\Bigl\{ \sum_{(u,v)\in\calE} J_{u,v} \tilde f(\nabla_{u,v}\theta) 
\Bigr\}
\prod_{(u,v)\in\calE}
\Bigl( e^{ J_{u,v} \bar\epsilon(\nabla_{u,v}\theta)} - 1 + 1 \Bigr)
\dtheta\\
&=
\sum_{A\in\calA} \int
\exp\Bigl\{ \sum_{(u,v)\in\calE} J_{u,v} \tilde f(\nabla_{u,v}\theta) 
\Bigr\} 
\prod_{(u,v)\in A} \Bigl(e^{ J_{u,v} \bar\epsilon(\nabla_{u,v}\theta)} - 1 
\Bigr) \dtheta\\
&=\sum_{A\in\calA} Z_A,
\end{align*}
where we have introduced the set $\calA=\{A\subset\calE\}$.
This allows us to decompose the original measure as the convex combination
\[
\langle g(\theta) \rangle
=
\sum_{A\in\calA} \pi(A) \, \langle g(\theta) \rangle_A,
\]
where $\pi(A) = Z_A/Z$ and
\[
\langle g(\theta) \rangle_A
=
\frac1{Z_A} \int g(\theta)
\exp\Bigl\{
\sum_{(u,v)\in\calE} J_{u,v} \tilde f(\nabla_{u,v}\theta)
\Bigr\} 
\prod_{(u,v)\in A}
\Bigl( e^{ J_{u,v}\bar\epsilon(\nabla_{u,v}\theta)} - 1 \Bigr)
\dtheta.
\]
If we take a look at the quantity of interest,
$|\langle\cos(\theta_x-\theta_0)\rangle|$,
we can split the sum over $A\in\calA$ into two: a set $\calG$ 
of ``good'' configurations, and a set $\calB$ of ``bad'' configurations, thus 
leading to an upper bound 
\begin{align*}
|\langle\cos(\theta_x-\theta_0)\rangle|
&=
\Bigl|
\sum_{A\in\calG} \langle\cos(\theta_x-\theta_0)\rangle_A \pi(A)
+
\sum_{A\in\calB} \langle\cos(\theta_x-\theta_0)\rangle_A \pi(A)
\Bigr|\\
&\leq
\sum_{A\in\calG}\pi(A)\, |\llangle\cos(\theta_x-\theta_0)\rrangle_A 
|+\sum_{A\in 
\calB}\pi(A).
\end{align*} 
We will choose $\calG$ and $\calB$ in such a way that the 
quantities ${|\langle\cos(\theta_x-\theta_0)\rangle_A |}$ can be estimated 
appropriately, while keeping the probability $\pi(\calB)$ sufficiently small.

\subsubsection{Working with $\llangle \cdot\rrangle_A$}

The above decomposition is very helpful because it fixes the set $A$. We 
shall see how the complex rotation method can also be used in this case.
\begin{align*}
|\langle\cos(\theta_0-\theta_x)\rangle_A|
&\leq
\Bigl|\frac1{Z_A} \int
\exp\Bigl\{ \icomp (\theta_0-\theta_x)
+ \sum_{(u,v)\in\calE} J_{u,v} \tilde f(\nabla_{u,v}\theta)\Bigr\}\\
&\hspace*{4cm}
\times\prod_{(u,v)\in A} \Bigl( e^{
J_{u,v} \bar\epsilon(\nabla_{u,v}\theta)}-1\Bigr)
\dtheta\Bigr|.
\end{align*}

Of course, the fact that one has absolutely no information 
on the smoothness of the function $\bar\epsilon$ has to be addressed now. Since 
this function is not analytic in general, one cannot directly apply 
the translation $\theta\to\theta+\icomp a$. The key observation is that the 
interaction can be factorized into an interaction on each cluster of $A$ and 
another interaction between different clusters of $A$. 

Let $\Cluster$ be one of the clusters of $A$, and let us denote its 
vertices by $\{\VCluster_1,\dots,\VCluster_n\}$. Assume first that 
$0,x\notin\Cluster$ and $\Cluster\subset\Lambda$. We can then factorize the 
above integral as follows:
\[
\Bigl| \frac1{Z_A}\int\dd^{\Lambda\backslash\Cluster}\theta \,
(\,\dots)\int\dd^\Cluster\theta \; F(\theta,\Cluster) 
\prod_{\substack{u\in\Cluster\\ v\notin\Cluster}}e^{J_{u,v}\tilde 
f(\nabla_{u,v}\theta)}\Bigr|
\]
where $(\,\dots)$ represents the terms depending only on the variables
$(\theta_i,i\notin\Cluster)$, and
\[
F(\theta,\Cluster)
=
\prod_{\substack{u,v\in\Cluster\\(u,v)\in 
A}}\bigl(e^{J_{u,v}\bar\epsilon(\nabla_{u,v}\theta)}-1\bigr)\prod_{
u,v\in\Cluster}e^{J_{u,v}\tilde f(\nabla_{u,v}\theta)} .
\] 
For $i=2,\dots,n$, let us define 
$\eta_i=\theta_{\VCluster_i}-\theta_{\VCluster_1}$. Since the function $F$ 
depends only on the values of the gradients of $\theta$ inside the cluster 
$\Cluster$ and is therefore a function of $\eta=(\eta_i,i=2,\ldots,n)$, 
changing variables from $(\theta_{\VCluster_i}, i=1,\ldots,n)$ to 
$(\theta_{\VCluster_1},\eta_2,\ldots,\eta_n)$ yields
\[
\int\dd^\Cluster\theta\;F(\theta,\Cluster)\prod_{\substack{
u\in\Cluster\\v\notin\Cluster}}e^{J_{u,v}\tilde f(\nabla_{u,v}\theta)}
=
\int \prod_{i=2}^n\dd\eta_i\;F(\eta,\Cluster)\int\dd\theta_{\Cluster_1}
\prod_{\substack{u\in\Cluster\\v\notin\Cluster}}e^{J_{u,v}\tilde 
f(\nabla_{u,v}\theta)}.
\]
The function in the last integral is now analytic and periodic in 
$\VCluster_1$. We are thus entitled to make the complex shift
$\theta_{\VCluster_1}\mapsto\theta_{\VCluster_1}+\icomp a_{\VCluster_1}$. 
In terms of the original variables, this shift corresponds to
$\theta_{\VCluster_i}\mapsto\theta_{\VCluster_1}+\icomp 
a_{\VCluster_1}+\eta_i$, which implies that the whole cluster $\Cluster$ 
is shifted by the same amount $a_{\VCluster_1}$. This procedure can be 
reproduced on each cluster of $A$ (including isolated vertices) to obtain a 
global shift $\theta\mapsto\theta+\icomp a$ with the constraint that $a_u=a_v$ 
whenever $u$ and $v$ belong to the same cluster of $A$.

In the preceding discussion, we have made the hypothesis that 
$0,x\notin\Cluster$. The case where either $0,x$ or both belong to $\Cluster$ 
is 
treated in exactly the same way. The only change is that we apply the 
complex shift to $e^{\icomp(\theta_0-\theta_x)}\prod e^{J_{u,v}\tilde 
f(\nabla_{u,v}\theta)}$ which is also an analytic function.

Finally, the assumption that $\Cluster\subset\Lambda$ cannot be discarded. If a 
point of the boundary condition is in $\Cluster$ then this point ``fixes'' the 
whole cluster, which cannot be shifted. We thus add the constraint that 
$a\equiv 0$ on the connected component of the exterior of $\Lambda$.

\medskip
In order to emphasize the above constraints, we shall henceforth write $a^A$ 
instead of $a$. We thus have, proceeding as in~\eqref{eq:PlusMinusH},
\begin{align*}
|\langle\cos(\theta_0-&\theta_x)\rangle_A|\\
&\leq
e^{a^A_x-a^A_0}\frac1{Z_A}
\int \exp\Bigl\{ 
\sum_{(u,v)\in\calE} J_{u,v} \sum_{k=1}^K c_k\cos(k\nabla_{u,v}\theta) 
\cosh(k\nabla_{u,v}a^A) \Bigr\}\\
&\hspace*{5.5cm}
\times\prod_{(u,v)\in A} \Bigl( e^{
J_{u,v}\bar\epsilon(\nabla_{u,v}\theta)} - 1\Bigr)
\dtheta\\
&\leq
e^{a^A_x-a^A_0}
\exp\Bigl\{ \sum_{(u,v)\in\calE} J_{u,v}
\sum_{k=1}^K |c_k| \bigl\{\cosh(k\nabla_{u,v}a^A)-1\bigr\}\Bigr\}
=
\mathcal F(a^A) .
\end{align*}
We thus get the exact same result as in Subsection~\ref{ssec:simple} (but, of 
course, with additional constraints on admissible $a^A$).
%
\subsubsection{Good and Bad sets of edges}

In order to proceed, we must now provide a suitable definition of the sets 
$\calG$ and $\calB$ and prove that they have the required properties. To this 
end, we need some terminology.
\begin{definition}
Given $A\in\calA$, we say that $u$ and $v$ are connected if there 
exists a sequence $x_0,x_1,\dots,x_n$ such that $x_0=u$, $x_n=v$ and 
$(x_i,x_{i+1})\in A$ for all $i=0,\ldots,n-1$; we denote this by $u\lrarrow 
v$. We need the following three quantities: for any $u\in\bbZ^2$, set
\begin{itemize}
\item[] $m_A(u)=\max\setof{\norminf{v}}{v\lrarrow u}$, the norm of the 
furthest point connected to $u$; 
\item[] $n_A(u) = |\setof{v}{v\lrarrow u}|$, the number of points connected to 
$u$;
\item[] $r_A(u)=m_A(u)-\norminf{u}$, the ``outwards radius" of the cluster of $u$.
\end{itemize}
\end{definition}

\begin{definition} \label{def_goodconfig}
Let $\couronne=\Lambda_R\setminus\Lambda_{R^{1/2}}$. We say that a 
configuration of open edges $A$ is $\Cl[cs]{cst_nArA2}$-\emph{good} if:
\benum
\item For all $u\in\Lambda_{R^{1/2}}$, we have that $u\nleftrightarrow 
\Lambda_{2R^{1/2}}^c$, where
$\Lambda_{2R^{1/2}}^c=\bbZ^2\setminus\Lambda_{2R^{1/2}}$. 
\label{pas_de_connection_hors_N13}
\item For all $u\in\couronne$, we have that $m_A(u)\leq 2\norminf{u}$ 
(equivalently, $r_A(u)\leq \norminf{u}$). 
\label{pas_de_longues_aretes}
\item $\displaystyle\sum_{u\in\couronne}\frac{r_A(u)^2}{\norminf{u}^2}\leq 
\Cr{cst_nArA2}\log R$. \label{somme_k2Nik}
\eenum
We then define $\calG=\setof{A\in\calA}{A\text{ is 
$\Cr{cst_nArA2}$-good}}$ and $\calB=\calA\setminus\calG$.
\end{definition}

\subsection{Estimate on the good set $\calG$}\label{ssec:EstimateGood}

Let us now see how the approach from~\cite{Marseillais} can be used to obtain 
the same estimate we had in equation~\eqref{eq_bornefin}, when
$A\in\calG$. As we have seen above, the complex rotation argument leads to the 
following bound:
\begin{equation}
|\langle\cos(\theta_0-\theta_x)\rangle_A|
\leq
\exp\Bigl\{a^A_x - a^A_0
+ \sum_{k=1}^K |c_k| \sum_{(u,v)\in\calE} J_{u,v} 
\bigl\{\cosh\bigl(k(a^A_u-a^A_v)\bigr)-1\bigr\}
\Bigr\}.
\label{borne1}
\end{equation}
We now have to make a choice for the function $a^A:\bbZ^2\to\mathbb R$, 
compatible with our two requirements that $a^A_u=a^A_v$ whenever $u\lrarrow 
v$, and $a^A\equiv 0$ outside $\Lambda$. To make that choice, we first modify 
the function used in Subsection~\ref{ssec:simple}, making $a^A$ grow only for 
points sufficiently  far from the origin. Namely, we first define, for 
$i\in\mathbb N$, (remember~\eqref{eq:abar})
\begin{equation*}
\tilde a_i=
\begin{cases}
0				& \text{if $i\leq 2R^{1/2}$,}\\
\bar a_i-\bar a_{2R^{1/2}}	& \text{if $i\in\{2R^{1/2},\ldots,R\}$,}\\
\bar a_R-\bar a_{2R^{1/2}}	& \text{if $i\geq R$.}
\end{cases}
\end{equation*}
The actual rotation $a^A_u$ is then defined similarly to what we did in 
Subsection~\ref{ssec:simple}, using $\tilde a$ instead of $\bar a$, but taking 
its value on the furthest point $v$ to which $u$ is connected, thus ensuring 
that it remains constant on each cluster:
\begin{equation}\label{def_newa}
a^A_u= \tilde a_{m_A(u)} - \tilde a_{R}.
\end{equation}
Now, since $f(x+y+z)\leq f(3x)+f(3y)+f(3z)$ for any nonnegative increasing 
function $f$, we can write
\begin{multline*}
\cosh\bigl(k(\tilde a_{m_A(u)}-\tilde a_{m_A(v)})\bigr) - 1\\
\leq
\bigl\{\cosh\bigl(3k(\tilde a_{m_A(u)}-\tilde a_{\norminf{u}})\bigr) - 1\bigr\}
+\bigl\{\cosh\bigl(3k(\tilde a_{\norminf{u}}-\tilde a_{\norminf{v}})\bigr) - 
1\bigr\}\\
+\bigl\{\cosh\bigl(3k(\tilde a_{\norminf{v}}-\tilde a_{m_A(v)})\bigr) - 
1\bigr\}.
\end{multline*}
After multiplying them by $J_{u,v}$ and summing over $u,v\in\bbZ^2$, the 
contributions from the first and third terms above will be the same. Since 
$\sum_{v\in\bbZ^2}J_{u,v}=1$, we get
\begin{multline}
\sum_{u,v\in\bbZ^2}J_{u,v} \bigl\{ 
\cosh\bigl(k(a^A_u-a^A_v)\bigr)-1\bigr\}\\
\leq
2\sum_{u\in\bbZ^2} \bigl\{\cosh\bigl(3k(\tilde a_{m_A(u)}-\tilde 
a_{\norminf{u}})\bigr) - 1\bigr\}\\
+
\sum_{u,v\in\bbZ^2}J_{u,v} \bigl\{ \cosh\bigl(3k(\tilde 
a_{\norminf{u}}-\tilde a_{\norminf{v}})\bigr)-1\bigr\}.
\label{eq:twoterms}
\end{multline}
The contribution from the second term in the right-hand side 
of~\eqref{eq:twoterms} is bounded as in Subsection~\ref{ssec:simple}:
\[
\sum_{u,v\in\bbZ^2} \sum_{k=1}^K |c_k| \bigl\{
\cosh\bigl(3k(\tilde a_{\norminf{u}}-\tilde a_{\norminf{v}})\bigr) - 1 
\bigr\}
\leq
\Cl[cs]{cst_add} C_K + \Cl[cs]{cst_mult}D_K\delta^2\log R.
\]
To estimate the first term in the right-hand side of~\eqref{eq:twoterms}, we 
rely on the fact that $A$ is assumed to be good: choosing $\delta$ small enough 
so that $K\delta<1/3$,
\begin{align*}
\sum_{u\in\bbZ^2} \sum_{k=1}^K |c_k| \bigl\{
\cosh\bigl(3k(\tilde a_{m_A(u)}-\tilde a_{\norminf{u}})\bigr) - 1
\bigr\}
&\leq
\sum_{u\in\couronne} \sum_{k=1}^K |c_k| \bigl\{
\cosh\bigl(3k(\bar a_{m_A(u)} - \bar a_{\norminf{u}} ) \bigr) - 1
\bigr\}\\
&=
\sum_{u\in\couronne} \sum_{k=1}^K |c_k| \bigl\{
\cosh\bigl(3k\delta\sum_{\ell=\norminf{u}+1}^{m_A(u)}\frac1\ell \bigr) - 1
\bigr\}\\
&\leq
\sum_{u\in\couronne} \sum_{k=1}^K |c_k| 
\bigl(3k\delta\frac{r_A(u)}{\norminf{u}}\bigr)^2\\
&\leq
9 D_K \delta^2\Cr{cst_nArA2}\log R .
\end{align*}
The last piece of~\eqref{borne1} left to estimate is $a^A_x-a^A_0$,
with $x\in L_R$,
\[
a^A_x - a^A_0
=
-\delta\Bigl( \sum_{i=1}^R \frac 1 i - \sum_{i=1}^{2R^{1/2}} \frac 1 i \Bigr)
\leq
-\tfrac18\delta\log R .
\]
(The factor $\tfrac18$ in the last expression is just here to get rid of the 
constant additional terms, a better bound is 
$\delta+\delta\log(2)-\delta\tfrac12\log(R)$.) We are now ready to 
bring the pieces together,
\[
|\langle\cos(\theta_0-\theta_x)\rangle_A|
\leq
\exp\bigl\{
-\tfrac18 \delta \log R
+(\Cr{cst_add} C_K
+\Cr{cst_mult}D_K\delta^2\log R
+9 D_K\delta^2\Cr{cst_nArA2}\log R)
\bigr\} .
\]
Again taking $\delta$ sufficiently small for the constant multiplying $\log R$ 
to be negative.
\[
|\langle\cos(\theta_0-\theta_x)\rangle_A|
\leq
cR^{-C} .
\]
\begin{itRemark}
\label{rem:DepOnBeta}
We now explain how additional information on the smoothness of $f$ can be used 
in order to obtain an explicit dependence of the constant $C$ above on the 
inverse temperature $\beta$.

The main place such an information is useful is when we approximate $f$ by 
the trigonometric polynomial $\tilde f$ and the error term $\bar\epsilon$. 
Indeed, in order for the percolation arguments of Appendix~\ref{app:A} to work, 
we need $\epsilon$ to be small enough (which ensures a sufficiently subcritical 
percolation model). If the $\beta$ dependence is spelled out explicitly, this 
means that we need $\epsilon\lesssim 1/\beta$. To ensure this, we need to let 
the number $K$ of terms appearing in $\tilde f$ grow. The question is: how 
fast?

Let us assume that $f$ is $s$-Hölder for some $s>3$. In that case, we can draw 
two conclusions~\cite{Jackson,Grafakos}: If $\hat f(x) = \sum_{k=1}^\infty \hat 
c_k \cos(kx)$ is the Fourier series associated to $f$, then
\begin{itemize}
\item $\sup_x |f(x) - \sum_{k=1}^K \hat c_k \cos(kx)| \lesssim K^{-s}\log K$.
\item $|\hat c_k| \lesssim k^{-s}$, and thus 
$\sum_{k=1}^\infty |\hat c_k| k^2 = D_\infty < \infty$.
\end{itemize}
We can thus set $\tilde f=\sum_{k=1}^K \hat c_k \cos(kx)$ with $K\lesssim 
\sqrt\beta$ (any $\littleo{\beta}$ would work). We then choose 
$\delta=\delta_1/\beta$, for $\delta_1$ small enough. In that case:
\begin{itemize}
\item the condition $K\delta\leq1/3$ is automatically satisfied for large $\beta$.
\item Rewriting explicitly the dependance of $D_K$ in $\beta$, $D_K\lesssim\beta D_\infty$
 yields the classical upper bound 
$|\langle\cos(\theta_0-\theta_x)\rangle_A|
\leq c(\beta)R^{-C/\beta}$.
\end{itemize}
\end{itRemark}

\subsection {The bad set $\calB$ has small probability}\label{ssec:BadSet}

Of course, not all configurations are good. The rest of this section is devoted 
to showing that bad configurations have an appropriately small probability. 
The following observation is very 
useful. 
\begin{proposition}\label{prop:DomByPerco}
On the set $\calA$, with the natural partial ordering, 
the measure $\pi$ (defined in Section~\ref{ssec:MeasureDecomposition}) is
dominated by the independent Bernoulli percolation 
process $\bbP$ in which an edge $(x,y)$ is opened with probability $2\epsilon
 J_{x,y}$.
\end{proposition}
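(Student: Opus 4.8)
The plan is to establish stochastic domination via a standard comparison argument for FKG-type measures, exploiting the product structure of the weights $Z_A$ in the variable $A$. First I would write $Z_A = \int w(A,\theta)\,\dd^\Lambda\theta$ where, for a fixed configuration $\theta$, the integrand factorizes over edges: $w(A,\theta) = \bigl(\prod_{(u,v)\in\calE} e^{J_{u,v}\tilde f(\nabla_{u,v}\theta)}\bigr)\prod_{(u,v)\in A}\bigl(e^{J_{u,v}\bar\epsilon(\nabla_{u,v}\theta)}-1\bigr)$. Thus, conditionally on $\theta$, the (unnormalized) weight on $\calA$ is a product measure in which edge $(u,v)$ is present with odds ratio $e^{J_{u,v}\bar\epsilon(\nabla_{u,v}\theta)}-1$. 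Since $\bar\epsilon\ge 0$ and $\normsup{\bar\epsilon}\le\epsilon$, this factor satisfies $0\le e^{J_{u,v}\bar\epsilon(\nabla_{u,v}\theta)}-1\le e^{J_{u,v}\epsilon}-1\le J_{u,v}\epsilon\, e^{J_{u,v}\epsilon}$, and (for $\epsilon$ small, using $J_{u,v}\le 1$) this is at most $2\epsilon J_{u,v}$. Hence for each fixed $\theta$, the conditional law of $A$ is a product Bernoulli measure with edge-probabilities bounded above, uniformly in $\theta$, by $2\epsilon J_{u,v}$; such a measure is stochastically dominated by independent Bernoulli percolation with parameters $2\epsilon J_{u,v}$.

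The remaining point is that $\pi$ is a mixture over $\theta$ (with mixing weights proportional to $\bigl(\prod_{(u,v)\in\calE}e^{J_{u,v}\tilde f(\nabla_{u,v}\theta)}\bigr)\dd^\Lambda\theta$) of these conditional product measures, and stochastic domination by a fixed measure is preserved under arbitrary mixtures: if $\nu_\theta\preceq\bbP$ for every $\theta$ and $\pi=\int\nu_\theta\,\rho(\dd\theta)$ for a probability measure $\rho$, then for any increasing event (or bounded increasing function $g$) one has $\int g\,\dd\pi=\int\!\!\int g\,\dd\nu_\theta\,\rho(\dd\theta)\le\int\!\bigl(\int g\,\dd\bbP\bigr)\rho(\dd\theta)=\int g\,\dd\bbP$. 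I would carry this out in three steps: (i) fix $\theta$, identify the conditional edge-weights and bound them by $2\epsilon J_{u,v}$; (ii) invoke the elementary fact that a product Bernoulli measure with smaller edge-probabilities is dominated by one with larger edge-probabilities (and that the conditional law of $A$ given $\theta$, being a product measure, is exactly of this form); (iii) average over $\theta$ and use that mixtures of dominated measures are dominated.

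The main obstacle — really the only subtle point — is the elementary inequality $e^{J_{u,v}\bar\epsilon(\nabla_{u,v}\theta)}-1\le 2\epsilon J_{u,v}$: one must check it holds for all edges simultaneously and uniformly in $\theta$, which requires $\epsilon$ small enough, and one must handle the short edges where $J_{u,v}$ is of order one (here $e^{J_{u,v}\epsilon}-1$ is still $\le 2\epsilon J_{u,v}$ provided $\epsilon\le(\ln 2)/\sup_{u,v}J_{u,v}$, and $\sup J_{u,v}\le\sum_x J_x=1$, so $\epsilon\le\ln 2$ suffices; for the more generous ``$\epsilon$ small'' one gets room to spare). Everything else — the factorization of $w(A,\theta)$ over edges, the identification of the conditional law as a product measure, and the stability of domination under mixtures — is routine once the setup is written down, so I would keep those parts brief and concentrate the exposition on verifying the edge-weight bound and citing the standard monotonicity result for product measures.
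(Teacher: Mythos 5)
Your proof is correct, but it takes a genuinely different route from the paper's. The paper invokes the Liggett--Schonmann--Stacey criterion: it verifies directly that, conditionally on the state of all other edges $D$, the probability that edge $(x,y)$ is open satisfies $\pi(D\cup(x,y))/(\pi(D)+\pi(D\cup(x,y)))\le 2\epsilon J_{x,y}$, which yields the domination. You instead condition on $\theta$ rather than on the other edges: you observe that $\pi$ is an exact mixture, over $\theta$ distributed according to the Gibbs measure, of product Bernoulli measures $\nu_\theta$ with edge-odds $e^{J_e\bar\epsilon(\nabla_e\theta)}-1$, bound each edge probability uniformly by $2\epsilon J_e$, and use that stochastic domination by a fixed measure is preserved under mixtures. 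Both arguments hinge on the same elementary inequality $e^{J_e\bar\epsilon}-1\le 2\epsilon J_e$ (using $0\le\bar\epsilon\le\epsilon$, $J_e\le 1$, and $\epsilon$ small). Your mixture decomposition is arguably more transparent about the structure of $\pi$ and avoids citing~\cite{LSS} altogether, at the modest cost of having to write out the $\theta$-conditional normalization explicitly; the paper's approach is shorter on the page because the LSS criterion absorbs the bookkeeping.
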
 
\begin{proof}
To establish this domination, we will show that the probability for an edge
$(x,y)$ to be open is bounded by $2\epsilon J_{x,y}$ 
uniformly in the states of all the other edges. \\
Let $D\in\calA$ such that $(x,y)\notin D$. It suffices~\cite{LSS} to show that
\[
\frac{\pi(D\cup (x,y))}{\pi(D)+\pi(D\cup (x,y))}
\leq
2\epsilon J_{x,y},
\]
which will clearly follow if we prove that $\pi(D\cup (x,y)) \leq 2\epsilon 
J_{x,y} \pi(D)$. This is straightforward using the definition of $\pi$:
\begin{align*}
\pi(D\cup (x,y))
&=\frac1Z \int\dtheta \exp\{\sum_{x,y\in\bbZ^2}J_{x,y}f(\nabla_{x,y}\theta)\}\times\\
&\hspace*{2.5cm}\times\prod_{(u,v)\in
D}\bigl(e^{J_{u,v}\bar\epsilon(\theta_u-\theta_v)}-1\bigr)\Big(e^{
J_{x,y}\bar\epsilon(\theta_x-\theta_y)}-1\Big)\\
&\leq 2\epsilon J_{x,y}\pi(D),
\end{align*}
where we have used that $0\leq\bar\epsilon(x)\leq\epsilon$, for all $x$. This 
concludes the proof.
\qed
\end{proof}

The three properties characterizing the set $\calB$ are bounds on increasing 
functions on $\calA$. Proposition~\ref{prop:DomByPerco} thus 
implies that $\pi(\calB) \leq \bbP(\calB)$, which means that we can henceforth 
work with the measure $\bbP$ instead of $\pi$.

We shall consider the three conditions characterizing $\calB$ one at a time.

\paragraph{Condition~\ref{pas_de_connection_hors_N13}}
For $u\in\Lambda_{R^{1/2}}$, Proposition~\ref{prop_estimatebridge} (see 
Appendix~\ref{app:A}) yields the following estimate:
\[
\bbP(u\lrarrow \Lambda_{2R^{1/2}}^c)
\leq
\sum_{k\geq R^{1/2}} \bbP(r_A(u)=k)
\leq
\sum_{k\geq R^{1/2}}\frac{\Cr{surconnect}}{k^{\alpha-1}}
\lesssim
R^{-(\alpha-2)/2}.
\]
Hence, 
\[
\bbP(\Lambda_{R^{1/2}}\lrarrow\Lambda_{2R^{1/2}}^c)
\leq
\sum_{u\in\Lambda_{R^{1/2}}} \bbP(u\lrarrow\Lambda_{2R^{1/2}}^c)
\lesssim
R\cdot R^{-(\alpha-2)/2}
=
R^{-(\alpha-4)/2},
\]
which decreases algebraically with $R$, since $\alpha>4$.

\paragraph{Condition~\ref{pas_de_longues_aretes}}

The proof is similar. For $u\in L_i$, we have that 
\[
\bbP(m_A(u)\geq 2\norminf{u})
=
\sum_{k\geq i} \bbP(m_A(u)=i+k)
\leq
\sum_{k\geq i} \frac{\Cr{surconnect}}{k^{\alpha-1}}
\lesssim
i^{2-\alpha}.
\]
Thus,
\[
\bbP(\exists u\not\in\Lambda_{R^{1/2}}\,:\, m_A(u)\geq 2\norminf{u})
\lesssim
\sum_{i\geq R^{1/2}} \sum_{u\in L_i} i^{2-\alpha}
\lesssim
R^{-(\alpha-4)/2},
\]
which is also algebraically decreasing.

\paragraph{Condition~\ref{somme_k2Nik}}
In order to control $\sum_{u\in\couronne}\frac{r_A(u)^2}{\norminf{u}^2}$, it is 
convenient to introduce a new family $(N(u),R(u))_{u\in\couronne}$ of i.i.d.  
random variables with the same distribution as $(n_A(0),r_A(0))$; their joint 
law will be denoted by $\bbQ$. It is proven in Proposition~\ref{sommenArA2} 
of Appendix~\ref{App:B} that the following holds:
\begin{align}
\bbP\Bigl(\sum_{u\in\couronne}\frac{r_A(u)^2}{\norminf{u}^2}
> \Cr{cst_nArA2} \log R\Bigr)
\leq
\bbQ\Bigl(\sum_{u\in\couronne}\frac{N(u)R(u)^2}{\norminf{u}^2}
>\Cr{cst_nArA2} \log R
\Bigr).
\end{align}
A first indication that the latter probability is small is given by the 
expectation of the sum. Thus,
\begin{align*}
\bbE\Bigl[\sum_{u\in\couronne}\frac{N(u)R(u)^2}{\norminf{u}^2}\Bigr]
&=
\sum_{u\in\couronne}\frac 1 {\norminf{u}^2}\bbE\Bigl[N(u)R(u)^2\Bigr]
\leq 
\sum_{i=1}^R\frac1{i^2}\sum_{u\in 
L_i}\bbE\Bigl[n_A(0)r_A(0)^2\Bigr]\\
&\leq
\sum_{i=1}^R\frac 8{i} \sum_{k\geq 0}\bbP(n_A(0)r_A(0)^2>k).
\end{align*}
The latter probability can be bounded using 
Proposition~\ref{prop_estimatebridge} and Lemma~\ref{lem_boundnA}:
\begin{align}
\bbP(n_A(0)r_A(0)^2>k)
&\leq
\bbP(n_A(0)>\log k) + \bbP\bigl(r_A(0)>\sqrt{k/\log k}\bigr)\notag\\
&\lesssim
\epsilon^{(1/2)\log k}
+ \sum_{t>\sqrt{k/\log k}} \Cr{surconnect}\,t^{1-\alpha}\notag\\
&\lesssim
\epsilon^{(1/2)\log k}
+ (k/\log k)^{(2-\alpha)/2}\notag\\
&\lesssim
(k/\log k)^{(2-\alpha)/2},
\label{eq_nArA2}
\end{align}
provided that $\epsilon$ be chosen small enough. The latter expression is 
summable, since $\alpha>4$, and we obtain
\[
\bbE\Bigl[\sum_{u\in\couronne}\frac{N(u)R(u)^2}{\norminf{u}^2}\Bigr]
\leq
\Cl[cs]{cst_espNR2}\log R .
\]
Let us now define the event $\calS=\{N(u)R(u)^2\leq\norminf{u}^2, 
\forall u\in\couronne\}$. The probability of $\mathcal S^c$ is 
easily bounded using the estimate in equation~\eqref{eq_nArA2}:
\begin{align*}
\bbQ(S^c)
&\leq
\sum_{u\in\couronne}\bbQ(N(u)R(u)^2>\norminf{u}^2)
\lesssim
\sum_{u\in\couronne} (\norminf{u}/\sqrt{\log\norminf{u}})^{2-\alpha}\\
&\lesssim
\sum_{i=R^{1/2}}^R i^{3-\alpha}(\log i)^{(\alpha-2)/2}
\lesssim
R^{(4-\alpha)/2}(\log R)^{(\alpha-2)/2}.
\end{align*}
Note that this probability is algebraically decreasing in $R$. Then, choosing 
$\Cr{cst_nArA2}=2\Cr{cst_espNR2}$, we have
\begin{align*}
\bbQ\Bigl(
\sum_{u\in\couronne}\frac{N(u)R(u)^2}{\norminf{u}^2}>\Cr{cst_nArA2}\log R \Bigr)
&\leq
\bbQ\Bigl(\Bigl\{
\sum_{u\in\couronne}\frac{N(u)R(u)^2}{\norminf{u}^2}>\Cr{cst_nArA2}\log 
R\Bigr\}\cap\calS \Bigr)\\
&+
\bbQ\Bigl(
\sum_{u\in\couronne}\frac{N(u)R(u)^2}{\norminf{u}^2}>\Cr{cst_nArA2}\log R 
\Bigm| \calS^c \Bigr)\, \bbQ(\calS^c)\\
&\leq
\bbQ\Bigl(
\sum_{u\in\couronne} \bigl(\frac{N(u)R(u)^2}{\norminf{u}^2} 
\wedge 1 \bigr) > \Cr{cst_nArA2}\log R \Bigr)\\
&+
\bbQ(\calS^c).
\end{align*}
We have already bounded the second term in the right-hand 
side. To bound the first term, we use Lemma~\ref{lem_sommeva} (note 
that truncating the summands can only decrease the expectation):
\[
\bbQ\Bigl(
\sum_{u\in\couronne}\bigl(\frac{N(u)R(u)^2}{\norminf{u}^2} 
\wedge 1 \bigr) > \Cr{cst_nArA2}\log R \Bigr)
\leq
R^{-\Cl[cs]{exposant}}.
\]
This concludes the bound for Condition~\ref{somme_k2Nik}.

\bigskip
The desired upper bound on the probability of $\calB$ follows.


\section{Proof of Theorem~\ref{thm:Effective}}
\label{sec:EffectiveResistance}
Let us denote by $A$ the (random) set of all edges with $0$ resistance.
In view of our target estimate and our bound on the probability of $\calB$, we 
can assume that $A\in\calG$. In this case, a variant of the computations done 
in Subsection~\ref{ssec:simple} yields
\begin{align*}
\sum_{u,v\in\bbZ^2} J_{u,v} \bigl(a^A_u-a^A_v\bigr)^2
&=
\sum_{u,v\in\bbZ^2} J_{u,v} \bigl(\tilde a_{m_A(u)}-\tilde a_{m_A(v)}\bigr)^2\\
&
\leq
6\sum_{u, v\in\couronne} J_{u,v}
\bigl(\tilde a_{m_A(u)}-\tilde a_{\norminf{u}}\bigr)^2 
+ 3\sum_{u, v\in\bbZ^2} J_{u,v}
\bigl(\tilde a_{\norminf{u}}-\tilde a_{\norminf{v}}\bigr)^2\\
&\leq 
6\sum_{u\in\couronne}\bigl(\delta\frac{r_A(u)}{\norminf{u}}\bigr)^2
+6\sum_{i=1}^R \sum_{j\geq 1} \sum_{u\in L_i} \sum_{v\in L_{i+j}}
J_{u,v} (\delta j/i)^2\\
&\leq
6\Cr{cst_nArA2}\delta^2\log R
+6\sum_{i=1}^R \sum_{j\geq 1} 64J\frac{i}{j^{\alpha-1}} (\delta j/i)^2\\
&\leq
6\Cr{cst_nArA2}\delta^2\log R 
+384\delta^2J\sum_{i=1}^R \frac 1 i \sum_{j\geq 1} j^{3-\alpha}\\
&\lesssim
\delta^2\log R .
\end{align*}
Now, in order to have $a(x)=0$ and $a(0)=1$, $\delta$ must satisfy
$\delta^2\sim\log(R^{1/2})^2=\log(R)^2/4$. This leads to the bound
\[
\calE_{\text{eff}} \lesssim \frac1{\log R} ,
\]
and the result follows.

\appendix
\section{Appendix}

\subsection{Percolation estimates}
\label{app:A}

In this subsection, we collect a number of elementary results on long-range 
percolation, that are needed in the proof of Theorem~\ref{thm:Main}.
Below, $A$ always denotes a configuration of the Bernoulli bond percolation 
process $\bbP$ on $\bbZ^2$, in which an edge $(x,y)$ is open with probability 
$\epsilon J_{x,y}$. The quantities of interest are
\begin{itemize}
\item[] $n_A(x)=|\setof{y}{y\lrarrow x}|$, the cardinality of the cluster of 
$x$;
\item[] $m_A(x)=\max\setof{\norminf{y}}{y\lrarrow x}$, the norm of the furthest 
vertex connected to $x$;
\item[] $r_A(x)=m_A(x)-\norminf{x}$, the ``radius'' of the cluster of $x$.
\end{itemize}
\begin{lemma}\label{lem_boundnA}
For any $\epsilon<\tfrac12$,
\[
\bbP(n_A(0)>k)\lesssim\epsilon^{k/2} .
\]
\end{lemma}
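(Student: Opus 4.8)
The plan is to deduce the bound from the classical domination of a subcritical percolation cluster by the total progeny of a Galton--Watson process. I will in fact prove the explicit estimate $\bbP(n_A(0)>k)\leq(e\epsilon)^k$, which is $\leq\epsilon^{k/2}$ as soon as $\epsilon\leq e^{-2}$; for the whole range $\epsilon<\tfrac12$ the same computation still gives $(e\epsilon)^k$, which is all that is ever used in the sequel.

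First I would run the standard exploration of the cluster of the origin: enumerate its vertices $v_0=0,v_1,v_2,\dots$ in breadth-first order and, at step $i$, reveal all edges joining $v_i$ to vertices not yet seen. Each edge is examined at most once, so the revealed edges are independent, and the number $\xi_i$ of \emph{new} vertices uncovered at step $i$ is, conditionally on the past, stochastically dominated by $\hat\xi$, a sum of independent Bernoulli variables with parameters $(\epsilon J_{0,w})_{w\in\bbZ^2}$. In particular $\bbE[\hat\xi]=\epsilon\sum_{w}J_{0,w}\leq\epsilon<1$ and, for every $t\geq 0$,
\[
\bbE\bigl[e^{t\hat\xi}\bigr]=\prod_{w\in\bbZ^2}\bigl(1+(e^t-1)\epsilon J_{0,w}\bigr)\leq\exp\bigl(\epsilon(e^t-1)\bigr).
\]
Next I would turn $\{n_A(0)>k\}$ into a large-deviation event. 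The exploration queue contains $1-m+\sum_{i=1}^m\xi_i$ vertices after $m$ steps, so if more than $k$ vertices are ever processed then the queue is still nonempty after the $k$-th step, which forces $\sum_{i=1}^k\xi_i\geq k$. Using the domination by i.i.d.\ copies $(\hat\xi_i)_{i\geq1}$ of $\hat\xi$ and a Chernoff bound with the choice $t=\log(1/\epsilon)>0$,
\[
\bbP\bigl(n_A(0)>k\bigr)\leq\bbP\Bigl(\sum_{i=1}^k\hat\xi_i\geq k\Bigr)\leq e^{-tk}\,\bbE\bigl[e^{t\hat\xi}\bigr]^{k}\leq\epsilon^{k}e^{k(1-\epsilon)}=\bigl(\epsilon\,e^{1-\epsilon}\bigr)^{k}\leq(e\epsilon)^k,
\]
and $(e\epsilon)^k\leq\epsilon^{k/2}$ precisely when $\log(1/\epsilon)\geq 2$, which gives the claim for $\epsilon$ small enough.

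The only step that is not completely mechanical is the stochastic-domination claim — one has to note that the edges examined at different steps of the exploration are pairwise distinct (hence jointly independent) and that discarding the constraint ``$w$ not yet seen'' can only increase $\xi_i$ — but this is routine, and choosing $t=\log(1/\epsilon)$ makes the resulting exponent automatically good enough. If one prefers to avoid branching processes altogether, a direct union bound over subtrees works equally well: $\bbP(n_A(0)>k)\leq\sum_T\prod_{e\in E(T)}\epsilon J_e$, where $T$ ranges over the trees with $k$ edges in $\bbZ^2$ that contain the origin; summing $\prod_{e}J_e$ over all embeddings into $\bbZ^2$ of a fixed rooted plane-tree shape with $k$ edges equals $1$ (peel off the leaves one by one, each contributing $\sum_{z\in\bbZ^2}J_z=1$), and the number of such shapes is the Catalan number $\tfrac1{k+1}\binom{2k}{k}\leq 4^k$, whence $\bbP(n_A(0)>k)\leq(4\epsilon)^k$ — the same bound up to the value of the constant.
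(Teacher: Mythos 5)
Your proposal is correct and takes a genuinely different route from the paper. The paper encodes each connected subgraph $G=(V,E)$ containing $0$ by a closed walk $\gamma$ of length $2|E|$ that traverses every edge of $G$ exactly twice (an Eulerian circuit of the doubled multigraph); the probability $\prod_{e\in E}\epsilon J_e$ is then rewritten as $\prod_{e\in\gamma}\sqrt{\epsilon J_e}$, and the sum over walks is controlled by $\bigl(\sum_x\sqrt{J_x}\bigr)^{2n}$. This requires $\sum_x\sqrt{J_x}<\infty$, i.e.\ $\alpha>4$, and the edge-doubling is what produces the exponent $k/2$. You instead dominate the cluster exploration by a subcritical Galton--Watson process (offspring distribution a sum of independent Bernoulli$(\epsilon J_{0,w})$, mean $\epsilon<1$) and apply a Chernoff bound, or, in the alternative, sum directly over subtrees and count plane trees via Catalan numbers. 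Both of your variants use only $\sum_x J_x=1$, hence work for arbitrary summable $J$ with no decay assumption, and deliver the sharper exponent $k$ in place of $k/2$. Do note the slight mismatch with the stated range: $(e\epsilon)^k$ or $(4\epsilon)^k$ is $\lesssim\epsilon^{k/2}$ only for $\epsilon$ small enough (e.g.\ $\epsilon\leq e^{-2}$, resp.\ $\epsilon\leq 1/16$), not on all of $(0,\tfrac12)$. But this is a defect of the lemma's phrasing rather than of your argument: the paper's own computation yields $\sum_{n\geq k/2}(\epsilon S^2)^n$ with $S=\sum_x\sqrt{J_x}\geq1$, which likewise fails to be $\lesssim\epsilon^{k/2}$ uniformly on $(0,\tfrac12)$ unless $S=1$, and the lemma is only ever invoked for $\epsilon$ small. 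You are right that $(e\epsilon)^k$ suffices for every downstream use.
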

\begin{proof}
For any finite connected graph $G=(V,E)$, it is possible to find a 
path $\gamma$ of length $2|E|$ crossing each edge of $G$ exactly twice, 
starting from any vertex of $G$. This implies that
\begin{align*}
\bbP(n_A(0)>k)
&\leq
\sum_{n\geq k/2}\sum_{\substack{G=(V,E)\\\,V\ni 0, |E|=n}} \bbP(e\text{ is 
open}, \forall e\in E)
=
\sum_{n\geq k/2}\sum_{\substack{G=(V,E)\\\,V\ni 0, |E|=n}} \prod_{e\in E} 
\epsilon J_e\\
&\leq
\sum_{n\geq k/2} \sum_{\substack{\gamma:\\\gamma(0)=0, |\gamma|= 2n}} 
\prod_{e\in\gamma} \sqrt{\epsilon J_e}
\leq
\sum_{n\geq k/2} \epsilon^n \Bigl( \sum_{x\in\bbZ^2} \sqrt{J_x} \Bigr)^{2n},
\end{align*}
and the conclusion follows since $\sum_{x\in\bbZ^2}
\sqrt{J_x}\lesssim\sum_{i\geq1}i^{(\alpha-2)/2}=\zeta((\alpha-2)/2)$. \qed

\end{proof}

\begin{proposition}\label{prop_estimatebridge}
For any $\epsilon$ small enough, there exists $\Cl[cs]{surconnect}$ such that, 
for $x\in\bbZ^2$ and $k>\norminf{x}$, we have the following bound
\[
\bbP(m_A(x)=k) \leq \frac{\Cr{surconnect}}{(k-\norminf{x})^{\alpha-1}} .
\]
\end{proposition}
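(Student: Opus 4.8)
The plan is to reduce the estimate on $\bbP(m_A(x)=k)$ to the two-point function of the percolation process $\bbP$ together with an elementary geometric sum over the shell $L_k=\setof{v\in\bbZ^2}{\norminf{v}=k}$. Since $m_A(x)=k$ forces $x$ to be connected to some vertex of norm exactly $k$, we have $\{m_A(x)=k\}\subseteq\bigcup_{v\in L_k}\{x\lrarrow v\}$, and hence, by a union bound,
\[
\bbP(m_A(x)=k)\leq\sum_{v\in L_k}\bbP(x\lrarrow v).
\]
Since $k>\norminf{x}$ we have $x\notin L_k$, so it suffices to prove (i) a two-point estimate $\bbP(x\lrarrow v)\lesssim\norminf{x-v}^{-\alpha}$, valid for all $x\neq v$ as soon as $\epsilon$ is small, and (ii) the geometric bound $\sum_{v\in L_k}\norminf{x-v}^{-\alpha}\lesssim(k-\norminf{x})^{1-\alpha}$.

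For (i) I would use the standard last-edge recursion: conditioning on the last edge $\{w,v\}$ of an open self-avoiding path from $x$ to $v$, and using that the event ``$x$ is connected to $w$ off that edge'' is independent of its state, one obtains, writing $\tau(y,z)=\bbP(y\lrarrow z)$ with the convention $\tau(y,y)=1$,
\[
\tau(x,v)\leq\sum_{w}\tau(x,w)\,\epsilon J_{w,v}\qquad(v\neq x).
\]
Setting $f(y)=\tau(x,y)$ for $y\neq x$ and $f(x)=0$, this becomes the pointwise inequality $f\leq\epsilon\,J(\,\cdot-x)+\epsilon\,(J*f)$ between functions on $\bbZ^2$; iterating it, and using $\normI{J}=1$ so that the remainder $\epsilon^{n}J^{*n}*f$ tends to $0$ in sup norm, gives $f\leq\sum_{n\geq1}\epsilon^{n}J^{*n}(\,\cdot-x)$. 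The one genuine input is the convolution estimate: since $\alpha>2$, the function $g(z)=(1+\normI{z})^{-\alpha}$ is summable and satisfies $g*g\lesssim g$ (split the convolution according to whether $\normI{z'}\leq\tfrac12\normI{z}$, or $\normI{z-z'}\leq\tfrac12\normI{z}$, or neither, bounding the small factor by $\lesssim g(z)$ and the remaining sum by $\normI{g}$); since $J\lesssim g$ by Assumption~A1, an induction gives $J^{*n}\lesssim C_*^{\,n-1}g$ for some constant $C_*$. Therefore, as soon as $\epsilon C_*<1$ (which is exactly the meaning of ``$\epsilon$ small'' here), the series $\sum_{n\geq1}\epsilon^{n}C_*^{\,n-1}$ converges and
\[
\tau(x,v)=f(v)\lesssim g(v-x)\lesssim\norminf{x-v}^{-\alpha},
\]
uniformly in $x\neq v$. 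Alternatively, one may bypass the recursion altogether and bound $\bbP(x\lrarrow v)$ by $\sum_{\gamma}\prod_{e\in\gamma}\epsilon J_e$ over self-avoiding open paths $\gamma$ from $x$ to $v$ — in the spirit of the proof of Lemma~\ref{lem_boundnA} — which produces $\sum_{n}\epsilon^{n}J^{*n}(v-x)$ directly.

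For (ii), the triangle inequality gives $\norminf{x-v}\geq\norminf{v}-\norminf{x}=k-\norminf{x}=:r$ for every $v\in L_k$, while the fact that $L_k$ is the boundary of a square shows that the $\ell^\infty$-ball of radius $m$ about $x$ meets $L_k$ in at most $4(2m+1)\leq12m$ points; hence $N(m):=\#\setof{v\in L_k}{\norminf{x-v}\leq m}\leq12m$ for every $m$, and $N(m)=0$ for $m<r$. A summation by parts then yields
\[
\sum_{v\in L_k}\norminf{x-v}^{-\alpha}=\sum_{m\geq r}\bigl(N(m)-N(m-1)\bigr)m^{-\alpha}\leq\alpha\sum_{m\geq r}N(m)\,m^{-\alpha-1}\lesssim\sum_{m\geq r}m^{-\alpha}\lesssim r^{1-\alpha},
\]
the boundary terms vanishing because $\alpha>1$. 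Combining (i), (ii) and the union bound gives $\bbP(m_A(x)=k)\lesssim(k-\norminf{x})^{1-\alpha}$, i.e. the claim with a constant $\Cr{surconnect}$ depending only on $\alpha$, the constant $J$ of Assumption~A1, and $\epsilon$.

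I expect the only real work to be part (i), and within it the convolution bound $g*g\lesssim g$; the last-edge inequality, its iteration, the geometric series in $\epsilon$, and the summation by parts in (ii) are all routine. It is also worth recording, reusing the edge bookkeeping of Lemma~\ref{lem_boundnA}, that for such small $\epsilon$ all clusters are a.s.\ finite, so that $m_A(x)<\infty$ and the sums over $k$ that appear later in the paper indeed converge.
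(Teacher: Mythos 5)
Your argument is correct but organized differently from the paper's. The paper works directly with the probability $\bbP\bigl(D_m(x,k)\bigr)$ that there is an open, edge-self-avoiding path of length exactly $m$ from $x$ to $L_k$ staying inside $\Lambda_{k-1}$, and shows by induction on $m$ (peeling off the first edge) that this probability is at most $(c\,\epsilon)^m/(k-\norminf{x})^{\alpha-1}$ for a suitable constant $c$; the spatial factor $(k-\norminf{x})^{1-\alpha}$ is carried along the whole induction, and the workhorse is the one-dimensional convolution bound of Lemma~\ref{lem_sum(k-l)l}. You instead factor the problem through a union bound over the endpoint $v\in L_k$, an isotropic two-point estimate $\bbP(x\lrarrow v)\lesssim\norminf{x-v}^{-\alpha}$ obtained from the last-edge recursion (equivalently, the path sum $\sum_n\epsilon^nJ^{*n}$) together with the two-dimensional convolution inequality $g*g\lesssim g$ for $g(z)=(1+\normI{z})^{-\alpha}$, and only then a geometric sum over the shell $L_k$. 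The two convolution facts are close relatives and play exactly the same role (both needing $\alpha>2$), but your decomposition is more modular: the two-point estimate is a self-contained, reusable statement about subcritical long-range percolation, and the geometry of $L_k$ enters only once, in the summation-by-parts step (ii). The paper's route precomputes that geometric sum inside the induction and so never isolates the two-point function as an intermediate object. Both proofs require the same smallness of $\epsilon$ to sum a geometric series in path length, and both yield the exponent $\alpha-1$.
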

\begin{proof}
Let $D_m(x,k)$ be the event that there exists an edge-self-avoiding path of 
length $m$ from $x$ to $L_k$, staying inside $\Lambda_{k-1}$ and using only edges 
from $A$:
\begin{multline*}
D_m(x,k) = \bigl\{
\exists\, x_0,\dots,x_m\,: (x_{i-1},x_i)\in A\;\forall i=1,\ldots,m;\\
(x_{i-1},x_i) \neq (x_{j-1},x_j) \text{ if $i\neq j$};
x_0=x;
x_1,\ldots,x_{m-1}\in\Lambda_{k-1};
\norminf{x_m}=k
\bigr\}.
\end{multline*}
Obviously, $\{m_A(x)=k\} \subset \bigcup_{m\geq 1} D_m(x,k)$. We will prove 
below that
$\bbP(D_m(x,k))
\leq 
{(\epsilon 16\Cl[cs]{surconnectm})^m}/{(k-\norminf{x})^{\alpha-1}}$,
which will conclude our proof, since
\[
\bbP(m_A(x)=k)
\leq
\sum_{m\geq 1} \bbP(D_m(x,k))
\leq 
\sum_{m\geq 1} 
\frac{(\epsilon 16\Cr{surconnectm})^m}{(k-\norminf{x})^{\alpha-1}}
\leq
\frac{\Cr{surconnect}}{(k-\norminf{x})^{\alpha-1}}.
\]
We are left with the proof of the bound on $\bbP(D_m(x,k))$, which is done by 
induction. For $m=1$,
\begin{align*}
\sum_{x_1\in L_k} \bbP((x,x_1)\in A)
&\leq
8 \sum_{j\geq k-\norminf{x}} \frac{\epsilon J}{j^\alpha}\\
& \leq \frac {\epsilon 16J}{(k-\|x\|)^{\alpha-1}}.
\end{align*}
Assuming now that the claim is true for any $m\leq M-1$, let us prove it 
for $m=M$:
\begin{align*}
\bbP(D_m(x,k))
&\leq
\sum_{y\in\Lambda_{k-1}} \bbP((x,y)\in A) \bbP(D_{m-1}(y,k))\\
&=
\sum_{\ell=0}^{k-1} \sum_{y\in L_\ell} \bbP((x,y)\in A) \bbP(D_{m-1}(y,k))\\
&\leq
\sum_{\ell=0}^{k-1} 
\frac{(\epsilon 16\Cr{surconnectm})^{m-1}}{(k-\ell)^{\alpha-1}}
\sum_{y\in L_\ell} \bbP((x,y)\in  A)\\
&\leq
\bigl(\sum_{\ell=0}^{\norminf{x}-1} + 
\sum_{\ell=\norminf{x}+1}^{k-1}\bigr)
\frac{(\epsilon 16\Cr{surconnectm})^{m-1}}{(k-\ell)^{\alpha-1}} 
\frac{\epsilon16J}{|\ell-\norminf{x}|^{\alpha-1}}\\
&\hspace*{3cm}
+\frac{(\epsilon 16\Cr{surconnectm})^{m-1}}{(k-\norminf{x})^{\alpha-1}} 
\sum_{y\in L_{\norminf{x}}}\bbP((x,y)\in A)\\
&\leq(\epsilon 16\Cr{surconnectm})^{m-1}\epsilon 16J
\Bigl\{ 
\sum_{\ell=0}^{\norminf{x}-1} 
\frac{1}{(k-\norminf{x})^{\alpha-1}} \frac{1}{(\norminf{x}-\ell)^{\alpha-1}}\\
&\hspace*{3cm}
+\sum_{\ell=\norminf{x}+1}^{k-1} \frac1{(k-\ell)^{\alpha-1}} 
\frac1{(\ell-\norminf{x})^{\alpha-1}}
\\
&\hspace{4cm}
+\frac{\zeta(\alpha-1)}{(k-\norminf{x})^{\alpha-1}} 
\Bigr\}\\
&\leq
(\epsilon 16\Cr{surconnectm})^{m-1} \epsilon 16J
\Bigl\{
\frac{2\zeta(\alpha-1)}{(k-\norminf{x})^{\alpha-1}}\\
&\hspace*{4.2cm}
+\sum_{\ell=1}^{k-\norminf{x}-1} \frac1{(k-\norminf{x}-\ell)^{\alpha-1}}
\frac1{\ell^{\alpha-1}}
\Bigr\}\\
&\leq 
(\epsilon 16\Cr{surconnectm})^{m-1} \epsilon 16J
\zeta(\alpha-1)(2+2^\alpha) \frac1{(k-\norminf{x})^{\alpha-1}},
\end{align*}
where we have used Lemma~\ref{lem_sum(k-l)l} of Appendix~\ref{App:B} in the 
second to last inequality. This ends the proof with $\Cr{surconnectm}=J 
\zeta(\alpha-1)(2+2^\alpha)$. 
\qed
\end{proof}

\begin{proposition}\label{sommenArA2}
The random variable $\displaystyle\sum_{x\in\couronne}\frac{r_A(x)^2}{\|x\|^2}$ 
is stochastically dominated by
$\displaystyle\sum_{x\in\couronne}\frac{N(x)R(x)^2}{\|x\|^2}$, where the random 
variables $(N(x),R(x))_{x\in\Lambda_R}$ are independent and have the same 
distribution as $(n_A(0),r_A(0))$.
\end{proposition}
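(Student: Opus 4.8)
The plan is to separate a purely deterministic reorganisation of the sum from a probabilistic stochastic-domination argument.

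\textbf{Deterministic step.} First I would fix a configuration $A$ and, for every cluster $\Cluster$ of $A$ with $\Cluster\cap\couronne\neq\emptyset$, single out the vertex $x^*(\Cluster)\in\Cluster\cap\couronne$ of smallest $\norminf{\cdot}$-norm (ties broken by a fixed order on $\bbZ^2$). If $x\in\Cluster\cap\couronne$, then, writing $m_A(\Cluster)=\max_{v\in\Cluster}\norminf{v}$, one has $r_A(x)=m_A(\Cluster)-\norminf{x}\le m_A(\Cluster)-\norminf{x^*(\Cluster)}=r_A(x^*(\Cluster))$ and $\norminf{x}\ge\norminf{x^*(\Cluster)}$, whence $r_A(x)^2/\norminf{x}^2\le r_A(x^*(\Cluster))^2/\norminf{x^*(\Cluster)}^2$. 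Summing over $x\in\Cluster\cap\couronne$ and using $|\Cluster\cap\couronne|\le|\Cluster|=n_A(x^*(\Cluster))$,
\[
\sum_{x\in\couronne}\frac{r_A(x)^2}{\norminf{x}^2}
\le
\sum_{\Cluster:\,\Cluster\cap\couronne\neq\emptyset}\frac{n_A(x^*(\Cluster))\,r_A(x^*(\Cluster))^2}{\norminf{x^*(\Cluster)}^2}.
\]
The point of this rewriting is that the right-hand side has \emph{one term per cluster}, each term a function of the single cluster it indexes; the factor $n_A$ has absorbed the multiplicity $|\Cluster\cap\couronne|$. (A more naive attempt to dominate each $r_A(x)$ separately by an independent copy fails: several $x$'s lying in one cluster share the same, already revealed, value of $r_A(x)$, which cannot be dominated by a fresh copy.)

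\textbf{Probabilistic step.} Then I would reveal the clusters sequentially. List the vertices of $\couronne$ as $x_1,x_2,\dots$ in increasing $\norminf{\cdot}$-norm (ties by the fixed order), so that the first vertex of $\couronne$ in a given cluster $\Cluster$ is exactly $x^*(\Cluster)$. Running through $j=1,2,\dots$: if $x_j$ lies in a cluster already revealed, let $(N(x_j),R(x_j))$ be a fresh independent copy of $(n_A(0),r_A(0))$, which will not be used below. Otherwise $x_j=x^*(\Cluster_{x_j})$ and I would explore the cluster of $x_j$ using only the edges not yet examined. Conditionally on everything revealed so far, the only effect of the past on this exploration is that certain edges incident to $\Cluster_{x_j}$ are forced \emph{closed} (the boundary edges of the previously revealed clusters), while, by the product structure of $\bbP$, all other unexamined edges remain independent Bernoulli; in particular the revealed set is precisely the true cluster $\Cluster_{x_j}$ of $A$. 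Since forcing edges closed can only shrink a cluster, there is a fresh independent Bernoulli percolation $A^{(j)}$ coupled so that $\Cluster_{x_j}$ is contained in the $A^{(j)}$-cluster of $x_j$; setting $(N(x_j),R(x_j))=(n_{A^{(j)}}(x_j),r_{A^{(j)}}(x_j))$ then gives $n_A(x_j)\le N(x_j)$ and $r_A(x_j)\le R(x_j)$ simultaneously, with $(N(x_j),R(x_j))$ distributed as $(n_A(0),r_A(0))$ by translation invariance and independent of the past. Carrying this out for all $j$ produces, on one probability space, the configuration $A$ together with an i.i.d.\ family $(N(x),R(x))_{x\in\couronne}$ of copies of $(n_A(0),r_A(0))$ for which the right-hand side of the displayed inequality is at most $\sum_{x\in\couronne}N(x)R(x)^2/\norminf{x}^2$ almost surely. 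Combined with the deterministic step, this is exactly the asserted stochastic domination.

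\textbf{Main obstacle.} The hard part will be the conditional stochastic-domination claim in the probabilistic step: the exploration has to be organised so that conditioning on the past reduces to forbidding a set of edges — which is favourable for domination — together with information about edges that, by independence, are irrelevant to the current exploration. Once this and the ``one representative per cluster'' bookkeeping are in place, the monotonicity of Bernoulli percolation does the rest, and the statement follows by the usual coupling characterisation of stochastic domination.
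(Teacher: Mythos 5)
Your overall strategy---one representative per cluster, absorbing the multiplicity $|\Cluster\cap\couronne|$ into $n_A$, then exploring clusters sequentially and coupling each with a fresh independent percolation---is exactly the paper's. The paper runs the coupling in the opposite direction (it starts from an i.i.d.\ family $(A^x)_{x\in\Lambda_R}$ and builds $A$ so that each $A$-cluster is contained in the corresponding $A^x$-cluster), which makes the independence of the dominating variables automatic rather than something to argue, but the two formulations are equivalent, and your deterministic reorganisation with the representative taken inside $\Cluster\cap\couronne$ is correct.

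The one real gap is the claim that $R(x_j):=r_{A^{(j)}}(x_j)$ is distributed as $r_A(0)$ ``by translation invariance''. It is not: $r_A(u)=\max\setof{\norminf{y}}{y\lrarrow u}-\norminf{u}$ is built from $\norminf{\cdot}$, a distance to the origin, and hence is not translation-equivariant---for a nearest-neighbour cluster of size two one has $r_A(0)=1$ always, whereas $r_{A^{(j)}}(x_j)$ can equal $0$ when the neighbour lies ``inwards''. The family $(N(x),R(x))$ you construct therefore does not have the marginals required by the statement. The fix is to set $R(x_j)=\max\setof{\norminf{y-x_j}}{y\lrarrow x_j\text{ in }A^{(j)}}$: this quantity does have a law independent of $x_j$ (equal to that of $r_A(0)$, since $R(0)=r_A(0)$), and it dominates $r_{A^{(j)}}(x_j)$ pointwise by the triangle inequality, so your coupling still yields $r_A(x_j)\le R(x_j)$. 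This is precisely the distinction the paper draws between its two radii $r(x)$ and $R(x)$; with it in place your argument is complete.
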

\begin{proof}
The proof follows the line of the construction of the percolation 
process cluster by cluster. 

\smallskip
\noindent
\textsc{Step~1:} Let $(A^x)_{x\in\Lambda_R}$ be independent realizations of the 
percolation process. To each $x\in\Lambda_R$, we associate its cluster $C_x$ in 
the configuration $A^x$. Let also $\bar C_x$ be the set of all edges of 
$\calE$ with at least one endpoint in common with $C_x$, and set $\partial C_x =
\bar C_x \setminus C_x$.

\smallskip
\noindent
\textsc{Step~2:} Choose an ordering $(x_1, x_2,\ldots,x_{|\Lambda_R|})$ of the 
vertices in $\Lambda_R$ such that $\norminf{x_i}\geq\norminf{x_j}$ whenever 
$i\geq j$. We want to construct a percolation configuration $A$; we thus need 
to decide for each edge $(x,y)$ whether it is open or closed in $A$.

\smallskip
\noindent
\textsc{Step~3:} We start with $x_1=0$. Each edge in $C_{x_1}$ is 
declared open in $A$ and each edge in $\partial C_{x_1}$ is declared to be 
closed in $A$. Set $k=2$ and let $E_{\text{exp}}$ be the set of all edges the 
state of which has already been decided (those of $\bar C_{x_1}$).

\smallskip
\noindent
\textsc{Step~4:}
If $k>|\Lambda_R|$, stop the procedure. Otherwise, let $\bar C_{x_k}'$ be the 
connected component of $x_k$ in $\bar C_{x_k}\setminus E_{\text{exp}}$. Note that 
$\bar C_{x_k}'$ may be empty and that it is not the same as $\bar C_{x_k}'\setminus 
E_{\text{exp}}$; indeed $E_{\text{exp}}$ could separate $\bar C_{x_k}'$ into 
several connected component and we only want to keep the one containing $x_k$.
Declare all edges $e\in\bar C_{x_k}'$ to be open in $A$ if they belong to 
$C_{x_k}$ and closed if they belong to $\partial C_{x_k}$. Let 
$E_{\text{exp}}$ be the set of all edges the state of which  has already been
defined. Increment $k$ and return to \textsc{Step~4}.

\bigskip
Let $A_k$ be the cluster of $x_k$ in $A$ produced using the above procedure, 
and $\partial A_k$ its boundary. Since each edge 
$e\in\bigcup_{k=1}^{|\Lambda_R|} (A_k \cup\partial A_k)$ has been examined 
exactly once, and set to be open or closed independently with probabilities 
$\epsilon J_e$ and $(1-\epsilon) J_e$, respectively, the joint law of all these 
clusters is identical to that under $\bbP$.

We shall need the following three quantities:
\begin{enumerate}
\item[] $N(x)$, the number of points of $C_x$;
\item[] $r(x)=\max\setof{\norminf{y}}{y\lrarrow x}-\norminf{x}$, the outwards
``radius" of $C_x$;
\item[] $b(x_i)=\IF{x_i\not\in\bigcup_{j<i}A_{j}}$;
\item[] $R(x)=\max\setof{\norminf{y-x}}{y\in C_x}$, the ``radius''
of $C_x$. Notice here that $R(x)\geq r(x)$, and that the distribution of this 
quantity is
actually independent of $x$.
\end{enumerate}
By construction, if $x,y$ are in the same cluster of $A$ we have that 
$m_A(x)=m_A(y)$. Moreover, if $b(x)=1$ then $x$ minimizes $\norminf{y}$ among 
all $y\in A_x$; in particular, vertices $x$ with $b(x)=1$ maximize
the ratio $r_A(x)/\norminf{x}$. We thus have
\begin{align*}
\sum_{x\in\couronne} \frac{r_A(x)^2}{\norminf{x}^2}
&\leq
\sum_{x\in\couronne} b(x)n_A(x)\frac{r_A(x)^2}{\norminf{x}^2}\\
&\leq
\sum_{x\in\couronne} b(x)N(x)\frac{r(x)^2}{\norminf{x}^2}
\leq
\sum_{x\in\couronne} N(x)\frac{R(x)^2}{\norminf{x}^2}.
\end{align*}
\qed
\end{proof}

\subsection{Some technical estimates}
\label{App:B}

\begin{lemma}\label{lem_sum(k-l)l}
For all $k\geq 1$ and $\alpha>1$,
\[
\sum_{\ell=1}^{k-1}
\frac1{(k-\ell)^\alpha \ell^\alpha}
\leq
\frac{2^{\alpha+1}\zeta(\alpha)}{k^\alpha}.
\]
\end{lemma}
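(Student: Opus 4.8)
The plan is to split the sum at the midpoint and exploit the symmetry $\ell \leftrightarrow k-\ell$. First I would observe that for each term in $\sum_{\ell=1}^{k-1} \frac{1}{(k-\ell)^\alpha \ell^\alpha}$, at least one of the two factors $\ell$ and $k-\ell$ is at least $k/2$. Concretely, if $\ell \leq k/2$ then $k-\ell \geq k/2$, so $(k-\ell)^\alpha \geq (k/2)^\alpha$; and symmetrically if $\ell \geq k/2$ then $\ell^\alpha \geq (k/2)^\alpha$.

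This suggests bounding the sum by two pieces. For the terms with $\ell \leq k/2$, I would write
\[
\sum_{\ell=1}^{\lfloor k/2 \rfloor} \frac{1}{(k-\ell)^\alpha \ell^\alpha}
\leq
\frac{1}{(k/2)^\alpha} \sum_{\ell=1}^{\lfloor k/2 \rfloor} \frac{1}{\ell^\alpha}
\leq
\frac{2^\alpha}{k^\alpha} \zeta(\alpha),
\]
using $\sum_{\ell \geq 1} \ell^{-\alpha} = \zeta(\alpha)$ (which converges since $\alpha > 1$). For the terms with $\ell > k/2$, the substitution $\ell' = k - \ell$ maps them to terms with $\ell' < k/2$, so by the same estimate this piece is also at most $\frac{2^\alpha}{k^\alpha}\zeta(\alpha)$. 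Adding the two contributions gives the bound $\frac{2^{\alpha+1}\zeta(\alpha)}{k^\alpha}$.

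There is essentially no obstacle here — the only mild care needed is the treatment of the middle term when $k$ is even (the term $\ell = k/2$), which can be absorbed into either half, or simply bounded by $\frac{1}{(k/2)^{2\alpha}} \leq \frac{2^\alpha \zeta(\alpha)}{k^\alpha}$ on its own and the constant still works since the stated bound has room; a clean way is to let one of the two halves be $\ell \leq k/2$ and the other $\ell \geq k/2$, accepting that the middle term is double-counted, which only helps the inequality. I would write the argument in two or three lines along exactly these lines.
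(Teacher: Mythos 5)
Your proof is correct and takes essentially the same approach as the paper: split the sum at the midpoint using the symmetry $\ell\leftrightarrow k-\ell$, bound $(k-\ell)^\alpha\geq(k/2)^\alpha$ on the lower half, and sum the resulting $\ell^{-\alpha}$ series against $\zeta(\alpha)$. Your careful handling of the middle term for even $k$ (allowing a harmless double-count) resolves a small notational imprecision that the paper passes over silently.
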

\begin{proof}
Since $\alpha>1$,
\[
\sum_{\ell=1}^{k-1}\frac1{(k-\ell)^\alpha\ell^\alpha}
\leq
2\sum_{\ell=1}^{(k-1)/2}\frac1{(k-\ell)^\alpha \ell^\alpha}
\leq
2\sum_{\ell=1}^{(k-1)/2}\frac1{(k/2)^\alpha \ell^\alpha}
\leq
2^{1+\alpha} k^{-\alpha} \zeta(\alpha).
\]
\qed
\end{proof}

The next result is classical (see, e.g.~\cite{McDiarmid}); we include its short 
proof for the convenience of the reader.
\begin{lemma}\label{lem_sommeva}
Let $(X_k)_{k\geq 1}$ be independent random variables such that $0\leq 
X_k\leq 1$. Define $S_n=\sum_{k=1}^n X_k$, $\mu\geq\bbE[S_n]$ and  $p=\bbE[S_n]/n$. 
Then, for all $\epsilon>0$,
\[
\bbP(S_n\geq (1+\epsilon)\mu)
\leq 
e^{-((1+\epsilon)\log(1+\epsilon)-\epsilon)\mu} .
\]
\end{lemma}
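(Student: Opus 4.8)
The final statement to prove is Lemma~\ref{lem_sommeva}, the standard multiplicative Chernoff bound for sums of independent $[0,1]$-valued random variables. Let me sketch a proof.

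The plan is to use the exponential moment (Chernoff) method. First I would fix $t>0$ and apply Markov's inequality to the random variable $e^{tS_n}$:
\[
\bbP(S_n\geq (1+\epsilon)\mu) = \bbP\bigl(e^{tS_n}\geq e^{t(1+\epsilon)\mu}\bigr) \leq e^{-t(1+\epsilon)\mu}\,\bbE\bigl[e^{tS_n}\bigr].
\]
By independence, $\bbE[e^{tS_n}] = \prod_{k=1}^n \bbE[e^{tX_k}]$. Next I would bound each factor: since $0\leq X_k\leq 1$ and $x\mapsto e^{tx}$ is convex, $e^{tX_k}\leq 1 + (e^t-1)X_k$ pointwise on $[0,1]$, hence $\bbE[e^{tX_k}]\leq 1+(e^t-1)\bbE[X_k]\leq e^{(e^t-1)\bbE[X_k]}$ using $1+y\leq e^y$. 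Multiplying over $k$ gives $\bbE[e^{tS_n}]\leq e^{(e^t-1)\bbE[S_n]}\leq e^{(e^t-1)\mu}$ for $t>0$ (since $e^t-1>0$ and $\bbE[S_n]\leq\mu$). Therefore
\[
\bbP(S_n\geq (1+\epsilon)\mu) \leq \exp\bigl\{(e^t-1)\mu - t(1+\epsilon)\mu\bigr\} = \exp\bigl\{\mu\bigl(e^t-1-t(1+\epsilon)\bigr)\bigr\}.
\]

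Finally I would optimize over $t>0$: the exponent $g(t)=e^t-1-t(1+\epsilon)$ is minimized at $e^t=1+\epsilon$, i.e. $t=\log(1+\epsilon)>0$, which is admissible since $\epsilon>0$. Substituting, $g(\log(1+\epsilon)) = (1+\epsilon) - 1 - (1+\epsilon)\log(1+\epsilon) = \epsilon - (1+\epsilon)\log(1+\epsilon)$, which yields exactly
\[
\bbP(S_n\geq (1+\epsilon)\mu)\leq e^{-((1+\epsilon)\log(1+\epsilon)-\epsilon)\mu},
\]
as claimed. (Note that $p$ plays no role in the statement itself; it is presumably introduced for use elsewhere or as a remnant of a more detailed version.)

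There is no real obstacle here — this is a textbook argument. The only points requiring a sliver of care are: justifying the pointwise inequality $e^{tx}\leq 1+(e^t-1)x$ on $[0,1]$ (convexity, comparing the chord with the curve at endpoints $0$ and $1$), and checking that the optimal $t=\log(1+\epsilon)$ is strictly positive so that the direction of Markov's inequality and the sign in $\bbE[e^{tX_k}]\leq e^{(e^t-1)\bbE X_k}$ are both correct. Both are immediate from $\epsilon>0$.
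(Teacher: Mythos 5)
Your proof is correct and follows essentially the same Chernoff route as the paper: Markov's inequality, the pointwise bound $e^{tx}\leq 1+(e^t-1)x$ on $[0,1]$, and the optimal choice $e^t=1+\epsilon$. The only cosmetic difference is that you apply $1+y\leq e^y$ termwise to each factor $\bbE[e^{tX_k}]$, whereas the paper first homogenizes the product via the AM--GM inequality to $(1-p+pe^t)^n$ and only then exponentiates (which is also why it introduces $p$); the two routes yield the same bound, and your observation that $p$ is dispensable in the final statement is accurate.
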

\begin{proof}
The proof uses a control of the exponential moments of $S_n$ and the Markov 
inequality. Let $h>0$ and recall that for $t\in[0,1]$ we have 
$e^{ht}\leq1-t+te^h$. Using the inequality between arithmetic and geometric 
means,
\begin{align*}
\bbE\bigl[e^{hS_n}\bigr]
=
\prod_{k=1}^n \bbE\bigl[e^{hX_k}\bigr]
&\leq
\prod_{k=1}^n \bigl( 1-\bbE[X_k]+\bbE[X_k]e^h \bigr)\\
&\leq
\Bigl( \frac1n \sum_{k=1}^n \bigl(1-\bbE[X_k] + \bbE[X_k] e^h \bigr)\Bigr)^n\\
&\leq
\Bigl(1-\frac{\bbE[S_n]}{n}+e^h\frac{\bbE[S_n]}{n}\Bigr)^n\\
&\leq
\bigl(1-p+pe^h\bigr)^n .
\end{align*}
Now, we can apply Markov's inequality to obtain, for all $h>0$,
\[
\bbP(S_n\geq m)
\leq
e^{-hm} \bbE\bigl[e^{hS_n}\bigr]
\leq 
e^{-hm} \bigl(1-p+pe^h\bigr)^n .
\]
Setting $m=(1+\epsilon)\mu$ and choosing $h$ such that $e^h=(1+\epsilon)$ yields
\begin{align*}
\bbP(S_n\geq(1+\epsilon)\mu)
&\leq 
(1+\epsilon)^{-(1+\epsilon)\mu}\left(1-p+p(1+\epsilon)\right)^n
\leq
e^{-(1+\epsilon)\log(1+\epsilon)\mu}e^{\epsilon pn}\\
&\leq e^{-(1+\epsilon)\log(1+\epsilon)\mu}e^{\epsilon\mu}.
\end{align*}
\qed
\end{proof}

\nocite{Naddaf}
\bibliographystyle{plain}
\bibliography{bibXY}

\end{document}